\definecolor{gr}{rgb}   {0.,   0.69,   0.23 }
\definecolor{bl}{rgb}   {0.,   0.5,   1. }
\definecolor{mg}{rgb}   {0.85,  0.,    0.85}
\definecolor{or}{rgb}   {0.9,  0.5,   0.}
\definecolor{webred}{rgb}{0.75,0,0}
\definecolor{webgreen}{rgb}{0,0.75,0}
\newtheorem{theorem}{Theorem}[section]
\newtheorem{proposition}[theorem]{Proposition}
\newtheorem{lemma}[theorem]{Lemma}
\newtheorem{corollary}[theorem]{Corollary}
\theoremstyle{definition}
\newtheorem{definition}[theorem]{Definition}
\theoremstyle{remark}
\newtheorem{remark}[theorem]{Remark}
\newcommand{\beq}{\begin{equation}}
\newcommand{\eeq}{\end{equation}}
\newcommand{\Bk}{\color{black}}
\newcommand{\N}{\mathbb{N}}
\newcommand{\R}{\mathbb{R}}
\newcommand{\rd}{{\mathrm d}}
\newcommand{\one}{\mathds{1}}
\newcommand{\dS}{{\mathbb{S}}}
\newcommand{\cB}{\mathcal{B}}
\newcommand{\cC}{\mathcal{C}}
\newcommand{\cO}{\mathcal{O}}
\newcommand{\cP}{\mathcal{P}}
\newcommand{\cQ}{\mathcal{Q}}
\newcommand{\cU}{\mathcal{U}}
\newcommand{\cV}{\mathcal{V}}
\newcommand{\gC}{\mathfrak{C}}
\newcommand{\gD}{\mathfrak{D}}
\newcommand{\gP}{\mathfrak{P}}
\newcommand{\gS}{\mathfrak{S}}
\newcommand{\bel}{\begin{equation} \label}
\newcommand{\ee}{\end{equation}}
\newcommand{\dD}{\mathbb{D}}
\newcommand{\dx}{\mathbb{X}}
\newcommand{\sE}{\mathscr{E}}
\newcommand{\seE}{\mathscr{E}^{*}}
\newcommand{\rG}{\mathrm{G}}
\newcommand{\rJ}{\mathrm{J}}
\newcommand{\rU}{\mathrm{U}}
\newcommand\eps{\epsilon}
\newcommand\spec{\gS}
\newcommand\supp{\operatorname{supp}}
\newcommand\diam{\operatorname{diam}}
\newcommand\Id{\operatorname{\mathbb{I}}}
\def\section{\@startsection{section}{1}\z@{.9\linespacing\@plus\linespacing}%
  {.7\linespacing} {\fontsize{13}{15}\selectfont\scshape\centering}}
\def\paragraph{\@startsection{paragraph}{4}%
  \z@{0.3em}{-.5em}%
  {$\bullet$ \ \normalfont\itshape}}
\title[]{On the negative spectrum of the Robin Laplacian in corner domains}
\author{Vincent Bruneau}
\address{Universit\'e  de Bordeaux, IMB, UMR CNRS 5251, 351 cours de la lib\'eration, 33405 Talence Cedex, {\it Tel.} +33 (0) 5 40 00 21 32} \email{vbruneau@math.u-bordeaux1.fr}
\author{Nicolas Popoff}
\address{Institut math\'ematique de Bordeaux, Universit\'e Bordeaux 1,
351 cours de la lib\'eration, 33405 Talence Cedex, France }
\email{nicolas.popoff@math.u-bordeaux1.fr}
\date{\today}
\begin{document}
\maketitle

\begin{abstract}
 For a bounded corner domain $\Omega$, we consider the attractive Robin Laplacian in $\Omega$ with large Robin parameter.  Exploiting multiscale analysis and a recursive procedure, we have a precise description of the mechanism giving the bottom of the spectrum.  It allows also the study of the bottom of the essential spectrum on the associated tangent structures given by cones. Then we obtain the asymptotic behavior of the principal eigenvalue for this singular limit in any dimension, with remainder estimates.  The same method works for the Schr\"odinger operator in $\R^n$ with a strong attractive $\delta$-interaction supported on $\partial \Omega$. Applications to some Ehrling's type estimates and the analysis of the critical temperature of some superconductors are also provided. 
\end{abstract}

\tableofcontents

\section{Introduction}
\subsection{Context: Robin Laplacian with large parameter}\label{S:intro1}
Let $M$ be a Riemannian manifold without boundary and $\Omega$ an open domain of $M$ (in practice one may thinks to $M=\R^{n}$ or $M=\dS^{n}$). We assume that $\Omega$ is a bounded corner domain which belongs to the recursive class described in \cite{Dau88}.
We are interested in the following eigenvalue problem: 
\beq
\label{E:evproblem}
\left \{ 
\begin{aligned}
&-\Delta u =\lambda u \ \ \mbox{on}\ \  \Omega
\\
&\partial_{\nu}u-\alpha u=0 \ \ \mbox{on}\ \  \partial\Omega
\end{aligned}
\right. .
\ee
Here $\alpha\in \R$ is the Robin parameter and $\partial_{\nu}$ denotes the outward normal to the boundary of $\Omega$.

We denote by $\cQ_{\alpha}[\Omega]$ the quadratic form of the Robin Laplacian on $\Omega$ with parameter $\alpha$:
\beq
\label{E:DefcqQ}
\cQ_{\alpha}[\Omega](u):=\| \nabla u\|_{L^2(\Omega)}^2-\alpha \|u\|_{L^2(\partial\Omega)}^2, \quad u\in H^1(\Omega).
\ee
Since $\Omega$ is bounded and is the union of Lipschitz domains (see \cite[Lemma AA.9]{Dau88}), the trace injection from $H^{1}(\Omega)$ into $L^{2}(\partial\Omega)$ is compact and the quadratic form $\cQ_{\alpha}[\Omega]$ is lower semi-bounded. We define $L_{\alpha}[\Omega]$ its self-adjoint extension whose spectrum is a sequence of eigenvalues, and we denote by $\lambda(\Omega,\alpha)$ the first one. It is the principal eigenvalue of the system \eqref{E:evproblem}.
 
The study of the spectrum of $L_{\alpha}[\Omega]$ has received some attention in the past years, in particular for the singular limit 
$\alpha\to+\infty$.
This problem appeared first in a model of reaction diffusion for which the absorption mechanism competes with a boundary term \cite{LaOckSa98}, and more recently it is established  that the understanding of $\lambda(\Omega,\alpha)$ provides informations on the critical temperature of surface superconductivity under zero magnetic field \cite{GiSm07}. Let us also mention that such models are also used in quantum Hall effect and topological insulators to justify appearance of edges states (see \cite{AsBaPP15}).

 In view of the quadratic form, it is not difficult to see that $\lambda(\Omega,\alpha)\to-\infty$ as $\alpha\to+\infty$  (while in the limit $\alpha\to-\infty$ they converge to those of the Dirichlet Laplacian). When $\Omega\subset \R^{n}$ is smooth, there holds $\lambda(\Omega,\alpha)\leq -\alpha^2$ for all $\alpha\geq0$, see \cite[Theorem 2.1]{GiSm07}.
  More precisely, it is known that $\lambda(\Omega,\alpha) \sim C_\Omega \alpha^2$ as $\alpha\to+\infty$ for some particular domains: for smooth domains,  $C_\Omega=-1$ (see \cite{LaOckSa98,LouZhu04} and \cite{DaKe10} for higher eigenvalues), and for planar polygonal domains with corners of opening $(\theta_{k})_{k=1,\ldots,N}$, there holds $$C_\Omega= - \max_{ 0<\theta_{k}<\pi}(1,\sin^{-2} \tfrac{\theta_{k}}{2}).$$
  This last formula conjectured in \cite{LaOckSa98} is proved in \cite{LevPar08}. For general domains $\Omega$ having a piecewise smooth boundary it is natural to study the operator on tangent spaces and  from homogeneity reasons (see Lemma \ref{L:scalings}), one expects that $\lambda(\Omega,\alpha) \sim C_\Omega \alpha^2$, when $\alpha \rightarrow + \infty$, with some negative constant $C_\Omega$. In \cite{LevPar08}, Levitin-Parnovski  consider domains with corners satisfying the uniform interior cone condition. For each $x \in \partial \Omega$, they introduce $E(\Pi_x)$ the bottom of the spectrum of the Robin Laplacian on an infinite model cone $\Pi_x$ (if $x $ is a regular point, it is a half-space) and show 
 \begin{equation}
\label{E:RoughLimit}
\lim_{\alpha\to+\infty} \frac{\lambda(\Omega,\alpha)}{\alpha^2}=\inf_{x \in \partial \Omega} E(\Pi_x).
\end{equation}
But we have no guarantee concerning the finiteness of  $E(\Pi_x)$ and moreover, even if it is  finite, we don't know if their infimum, over $\partial \Omega$, is reached.
  Then an important question is to understand more precisely the influence of the geometry (of the boundary) of $\Omega$ in the asymptotic behavior of $\lambda(\Omega,\alpha)$ in order to give a meaning to \eqref{E:RoughLimit} (in particular proving that $\inf_{x \in \partial \Omega} E(\Pi_x)$ is finite)  and if possible, to obtain some remainder estimates.

\subsection{Local energies on admissible corner domains}
 In this article, our purpose is to develop a framework in the study of such asymptotics by introducing the {\it local energy}  function $x \mapsto E(\Pi_x)$   on the recursive class of corner domains (see \cite{Dau88}). The natural tangent structures are given by dilation invariant domains, more shortly referred as {\it cones}.
When the domain is a convenient cone $\Pi$, the quadratic form in \eqref{E:DefcqQ} may still be defined on $H^{1}(\Pi)$. By immediate scaling, $\cQ_{\alpha}[\Pi]$ is unitarily equivalent to $\alpha^2\cQ_{1}[\Pi]$. Therefore the case where the parameter is equal to 1 plays an important role and we denote by $\cQ[\Pi]=\cQ_{1}[\Pi]$. For a general cone, 
 we don't know whether $\cQ[\Pi]$ is lower semi-bounded, and we define 
$$E(\Pi)=\inf_{\substack{u\in H^{1}(\Pi) \\ u\neq0}} \frac{Q[\Pi](u)}{\|u\|^2}$$
the ground state energy of the Robin Laplacian on $\Pi$.  
For  $x\in  {\overline \Omega}$,  denote by $\Pi_{x}$ the tangent cone at $x$. When $\Pi_x$ is the full space (corresponding to interior points), there is no boundary and $E(\Pi_x)=0$, whereas,   when $\Pi_x$ is a half-space (corresponding to regular points of the boundary), it is easy to see that $E(\Pi_x)=E(\R_+)=-1$ (see \cite{DaKe10}). Moreover, when $\Pi_x$ is an infinite planar sector of opening $\theta$, denoted by $S_{\theta}$,  $E(\Pi_x)$ is given by (see \cite{LaOckSa98,LevPar08}): 
\beq
\label{E:Esectors}
 E(S_{\theta})=\left\{ \begin{aligned} &-\sin^{-2}\tfrac{\theta}{2} \ \ \mbox{if} \ \ \theta\in (0,\pi) \\ &-1 \ \ \mbox{if} \ \ \theta\in (\pi,2\pi) \end{aligned}\right. 
 \ee
No such explicit expressions are available for general cones in higher dimension. In link with \eqref{E:RoughLimit}, we introduce the infimum of local energy $E(\Pi_{x})$, for $x\in  {\overline \Omega}$, 
which, from the above remarks, is also the infimum on the boundary: 
\begin{equation}\label{defsE}
\sE(\Omega):=\inf_{x\in \partial {\Omega}}E(\Pi_{x}).
\end{equation}
 % Firstly, it is not clear whether $E(\Pi_x)$ is finite or not, and secondly, it is not clear whether $\sE(\Omega)$ is reached among all tangent problems.
 Our goal is to prove the finiteness  of $\sE(\Omega)$ (and firstly, of $E(\Pi_x)$, $x\in \overline {\Omega}$) for admissible corner domains and to give an estimate of $\lambda(\Omega,\alpha)-\alpha^2\sE(\Omega)$ as $\alpha$ is large.
 In view of the above particular cases, the local energy is clearly discontinuous (even for smooth domains it is piecewise constant with values in $\{0, -1\}$). 
 We will use a recursive procedure in order to prove the finiteness and the lower semi-continuity of the local energy in the general case. It relies also on a multiscale analysis to get an estimate of the first eigenvalue, as developed in the recent work \cite{BoDauPof14} for the semiclassical magnetic Laplacian. 
 Unlike the work  \cite{BoDauPof14}, where the complexity of model problems limits the study to dimension 3, for  the Robin Laplacian we have a good understanding of the ground state energy on corner domains in any dimension. Moreover these technics allow an analog spectral study  of Schr\"odinger operator with $\delta$-interaction supported on closed corner hypersurfaces and on conical surfaces. 
 %Finally, we analyse consequences of our results for transition temperatures of conducting models and for optimal constants in a Ehrling's type Lemma concerning relative bound zero for the trace operator.

\subsection{Results for the Robin Laplacian}
 We define below generic notions associated with cones: 
\begin{definition}
\label{D:cones}
A cone $\Pi$ is a domain of $ \R^{n}$ which is dilation invariant: 
$$\forall x\in \Pi, \forall \rho>0, \quad \rho x\in \Pi.$$
The section of a cone $\Pi$ is $\Pi\cap \dS^{n-1}$, generically denoted by $\omega$. We said that two cones $\Pi_{1}$ and $\Pi_{2}$ are equivalent, and we denote by $\Pi_{1}\equiv \Pi_{2}$, if they can be deduced one from another by a rotation. Given a cone $\Pi$, there exists $0 \leq d \leq n$ such that
$$\Pi \equiv \R^{n-d}\times \Gamma, \ \ \mbox{with} \ \ \Gamma \ \mbox{a cone in } \R^{d}.$$
When $d$ is minimal for such an equivalence, we say that $\Gamma$ is the reduced cone of $\Pi$. When $d=n$, so that $\Pi=\Gamma$, we say that $\Pi$ is irreducible.
\end{definition}
In the following, $\gP_{n}$ denotes the class of admissible cones of $\R^{n}$, and $\gD(M)$ denotes the class of admissible corner domains on a given Riemannian manifold $M$ without boundary. We refer to Section \ref{S:CD} for precise definitions of these classes of domains. Note that these domains include various possible geometries, like polyhedra and circular cones.
%\paragraph{Robin Laplacian on cones}
\begin{theorem}
\label{T:Efiniessspectrum}
Let $\Pi\in \gP_{n}$  be an admissible cone. Then we have:
\begin{enumerate}
\item $E(\Pi)>-\infty$ and the Robin Laplacian $L[\Pi]$ is well defined as the Friedrichs extension of $Q[\Pi]$ with form domain $D(Q[\Pi])=H^1(\Pi)$. 
\item Let $\Gamma$ be the reduced cone of $\Pi$. Then the bottom of the essential spectrum of $L[\Gamma]$ is $\sE(\omega)$, where $\omega$ is the section of $\Gamma$.
\end{enumerate}
\end{theorem}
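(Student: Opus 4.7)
The plan is to argue by induction on the ambient dimension $n$, exploiting the recursive structure of $\gP_{n}$: the section $\omega$ of $\Pi\in\gP_n$ is an admissible corner domain of $\dS^{n-1}$, and at each point $y\in\partial\omega$ the tangent cone $\Pi_y$ lies in $\gP_{n-1}$. The cases $n\leq 1$ are handled by direct inspection ($\R$, $\R_{\pm}$). For the induction step, the hypothesis furnishes $E(\Pi_y)>-\infty$ for every $y\in\partial\omega$, which is the key input for both statements.

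\textbf{Part (1).} I would localise on the section. Around each $y\in\overline{\omega}$ fix a geodesic ball $B_y$ in $\dS^{n-1}$ inside which $\omega$ is diffeomorphic to a neighbourhood of the origin in $\Pi_y\cap\dS^{n-1}$. In the polar product structure $\Pi\simeq(0,\infty)\times\omega$, the sector lying above $B_y$ is bi-Lipschitz modelled by $\R\times\Pi_y$, whose Robin ground state energy equals $E(\Pi_y)$ (partial Fourier transform in the flat direction). With an IMS partition of unity $(\chi_j^2)$ in the angular variable, subordinated to a finite cover $(B_{y_j})$, one obtains
\beq
\cQ[\Pi](u) \;\geq\; \sum_j \cQ[\Pi](\chi_j u) - C\|u\|^2 \;\geq\; \bigl(\min_j E(\Pi_{y_j}) - C\bigr)\|u\|^2,
\ee
after comparing each localised Rayleigh quotient to its tangent model (with a Jacobian error that only contributes a lower-order remainder). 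Since $\min_j E(\Pi_{y_j})>-\infty$ by induction, $\cQ[\Pi]$ is semi-bounded; routine trace estimates give closedness on $H^1(\Pi)$, and $L[\Pi]$ is its Friedrichs extension.

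\textbf{Part (2).} I would combine Persson's formula
\beq
\inf \sigma_{ess}(L[\Gamma]) \;=\; \lim_{R\to\infty}\inf\bigl\{ \cQ[\Gamma](u)/\|u\|^2 : u\in H^1(\Gamma)\setminus\{0\},\ \supp u\subset \Gamma\setminus\overline{B_R}\bigr\}
\ee
with the cone scaling $\cQ_\alpha[\Pi]\cong\alpha^2\cQ_1[\Pi]$ from Lemma \ref{L:scalings}. For the upper bound $\leq\sE(\omega)$, fix $y\in\partial\omega$ and a compactly supported normalised quasi-mode $v$ for $L[\Pi_y]$ with $\cQ[\Pi_y](v)\leq E(\Pi_y)+\varepsilon$; transplanting $v$ to the point $\rho y$ via the straightening diffeomorphism yields, as $\rho\to\infty$, a Weyl sequence whose Rayleigh quotient tends to $E(\Pi_y)+\varepsilon$, so $\inf\sigma_{ess}\leq E(\Pi_y)$ for every $y\in\partial\omega$ and $\varepsilon>0$. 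For the reverse bound, apply a multiscale partition $(\chi_{j,R}^2)$ on $\Gamma\setminus B_R$ with angular width fixed and radial scale $\sim|x|$, so that $\||\nabla\chi_{j,R}|^2\|_\infty=O(|x|^{-2})=o_R(1)$; each piece, after rescaling by its radial centre $\rho_j\geq R$, is compared to the model $\R\times\Pi_{\sigma_j}$ and gives a Rayleigh quotient bounded below by $E(\Pi_{\sigma_j})-o_R(1)\geq\sE(\omega)-o_R(1)$.

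The main obstacle will be the lower bound in Part (2). The partition of unity must genuinely resolve the stratification of $\partial\omega$, so that pieces sitting near a corner point $y$ feel the actual tangent cone $\Pi_y$ and not a coarser model, while the Jacobian error of the straightening diffeomorphism must remain uniformly negligible across all radial scales $\rho_j$. This is precisely where the multiscale machinery developed in \cite{BoDauPof14} becomes indispensable; by contrast, Part (1) only requires a single-scale localisation.
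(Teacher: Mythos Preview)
Your Part~(1) has a genuine gap at the vertex of $\Pi$. The angular cut-offs $\chi_j=\chi_j(\theta)$ you propose, viewed as functions on $\Pi$, satisfy $|\nabla\chi_j(x)|\sim|x|^{-1}$, so the IMS error is not $C\|u\|^2$ but $\int_\Pi|u|^2|x|^{-2}$, which you do not control. The same issue spoils the model comparison: the sector over $B_y$ can be matched to $\R\times\Pi_y$ with Jacobian close to the identity only \emph{away} from the vertex; near $r=0$ the boundary Jacobian (which rescales the Robin parameter) is far from $1$, so the ``lower-order remainder'' you invoke is in fact unbounded there. A purely angular localisation cannot work without first cutting off radially. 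The paper does precisely this: a single \emph{radial} IMS cut-off splits $\Pi$ into $\{|x|<2R\}$, where the trace is compact because $\Pi\cap\cB(0,2R)$ is a bounded corner domain, and $\{|x|>R\}$, where one drops the radial kinetic term in polar coordinates to obtain (Lemma~\ref{L:UBloin})
\[
\cQ[\Pi](u)\;\geq\;\int_{R}^{\infty}\frac{1}{r^{2}}\,\cQ_{r}[\omega]\bigl(v(r,\cdot)\bigr)\,r^{n-1}\rd r\;\geq\;\inf_{r>R}\frac{\lambda(\omega,r)}{r^{2}}\,\|u\|^{2},
\]
and then invokes $\lambda(\omega,r)\geq r^{2}\sE(\omega)-o(r^{2})$ from Section~\ref{S:LB} applied to $\omega$. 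Note also that the inductive hypothesis actually needed is $\sE(\omega)>-\infty$, which is stronger than your pointwise ``$E(\Pi_y)>-\infty$ for each $y$''; the paper secures this uniform bound separately (Theorem~\ref{T:inffinite}) via lower semi-continuity on singular chains.

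For Part~(2) your upper bound via transplanted quasi-modes matches the paper's (Lemma~\ref{L:UPEnergeps} plus scaling). For the lower bound, however, the paper again avoids any multiscale partition of $\Gamma\setminus\cB(0,R)$: the same polar inequality above gives $\Sigma(R)\geq\inf_{r>R}\lambda(\omega,r)/r^{2}\to\sE(\omega)$ directly, so the multiscale analysis is carried out once on the $(n{-}1)$-dimensional section $\omega$, not on the cone itself.
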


%The operator $L[\Pi]$ can now be defined as the Friedrichs extension of $Q[\Pi]$. It may have essential spectrum since it acts on a unbounded domain. The study of the essential spectrum is an interesting question in itself, indeed it is the first step when determining the presence and the finiteness of discrete spectrum below the essential spectrum of $L[\Pi]$. 

%Several works aim at determining the bottom of the essential spectrum on conical domains for particular cases, both for the Robin Laplacian (\cite{Pank16}) 
% and $\delta$-interaction (\cite{ExIchi01,BerExLot14}). 

This theorem generalizes to cones having no regular section, the result of  \cite{Pank16} where the bottom of the essential spectrum is proved to be -1 for cone with regular section (as discussed at the end of Section \ref{S:intro1}, in this case  
$\sE(\omega)= -1$).

The crucial point of this theorem is to show that the Robin Laplacian on a cone, far from the origin, can be linked to the Robin Laplacian on the section of the cone, with a parameter related to the distance to the origin. 

Notice that this theorem provides an effective procedure in order to compute the bottom of the essential spectrum for Laplacians on cones.
In particular, as shown by Remark \ref{ContreExample}, we obtain that  \cite[Theorem 3.5]{LevPar08} is incorrect in dimension $n\geq 3$, indeed we construct a cone which contains an hyperplane passing through the origin for which  the bottom of the essential spectrum (then of the spectrum) of the Robin Laplacian is below -1.
%\paragraph{Robin Laplacian on corner domains}

The next step is to minimize the local energy on a corner domain $\Omega$ and to prove that $\sE(\Omega)$ is finite. 
%But it is clear that $x\mapsto E(\Pi_{x})$ is discontinuous on $\overline{\Omega}$, as shows the example of polygons, see \eqref{E:Esectors}. 
%We see the local energy as a function defined on singular chains of $\Omega$ (see Section \ref{SS:Sc}). We will introduce the {\it second energy level} $\sE^{*}(\Pi)$ as the infimum of the local energy taken on singular chains longer than 1 (see Section \ref{SS:Sc} for the definition). By the help of Theorem \ref{T:essspectrum}, we will show in Corollary \ref{C:monotonicity} a monotonicity property of the local energy, when the singular chains are equipped with a natural partial order. This analysis provides the following inequality
%\beq
%\label{Eq:lscintro}
%E(\Pi) \leq \sE^{*}(\Pi).
%\ee
Thanks to Theorem \ref{T:Efiniessspectrum}, we will be able to prove some monotonicity properties (on singular chains, see Section \ref{SS:Sc} for the definition) which, combined with continuity of the local energy (for the topology of singular chains), allow to apply \cite[Section 3]{BoDauPof14} and to obtain:
% Then, we obtain the lower semi-continuity of the local energy function $x\mapsto E(\Pi_{x})$ and from the compactness of $\overline{\Omega}$, by induction on the dimension $n$, we will deduce:
\begin{theorem}
\label{T:inffinite}
For any corner domain $\Omega\in \gD(M)$, the energy function $x\mapsto E(\Pi_{x})$ is lower semi-continuous, and we have $\sE(\Omega)>-\infty$.
\end{theorem}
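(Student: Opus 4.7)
The plan is to deduce the statement by combining Theorem~\ref{T:Efiniessspectrum} with the abstract compactness framework of \cite[Section 3]{BoDauPof14}, whose hypotheses reduce to two ingredients: a descent/monotonicity relation for $E(\Pi_x)$ along singular chains, and continuity of the local energy in the singular-chain topology.

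First I would set up the \emph{descent inequality}. For every $x\in\partial\Omega$, let $\Gamma_x$ denote the reduced tangent cone with section $\omega_x\subset \dS^{d-1}$. Since $L[\Pi_x]$ and $L[\Gamma_x]$ are unitarily equivalent up to a flat factor, $E(\Pi_x)=E(\Gamma_x)$, and Theorem~\ref{T:Efiniessspectrum}(2) yields
\[
E(\Pi_x)\ \leq\ \inf\sigma_{\mathrm{ess}}(L[\Gamma_x])\ =\ \sE(\omega_x)\ =\ \inf_{y\in\partial\omega_x} E(\Pi_y^{\omega_x}),
\]
where $\Pi_y^{\omega_x}$ is the tangent cone to $\omega_x$ at $y$, which coincides with the tangent cone along a singular chain of length two issued from $x$. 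This identifies $E(\Pi_x)$ as an upper bound for the energies at the next generation of singular chain points.

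Second, I would iterate this descent along singular chains. By the very definition of admissible corner domain in $\gD(M)$ and of the recursive class $\gP_n$, any singular chain has length at most $n+1$ and the dimensions of the successive sections strictly decrease; after finitely many descent steps one reaches a trivial tangent (empty boundary, half-line or half-space) where $E\in\{0,-1\}$. Finiteness of $E(\Pi_x)$ at every $x$ is already ensured by Theorem~\ref{T:Efiniessspectrum}(1), but the explicit finite descent is what also propagates quantitative control needed in the continuity step.

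Third, I would establish lower semi-continuity of $x\mapsto E(\Pi_x)$ using the singular-chain topology. Along a sequence $x_k\to x$ in $\overline\Omega$, after rotation and extraction the cones $\Pi_{x_k}$ converge, in that topology, to a cone $\Pi_\infty$ which appears as the tangent cone at some point of a singular chain starting at $x$; the descent inequality then gives $E(\Pi_x)\leq E(\Pi_\infty)$. On the other hand, $H^1$-truncations of any quasi-minimizer for $\Pi_\infty$, transported back by the defining diffeomorphisms of the corner charts, are admissible for $\cQ[\Pi_{x_k}]$ with asymptotically the same Rayleigh quotient, yielding $E(\Pi_\infty)\leq \liminf_k E(\Pi_{x_k})$. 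Combined, these two inequalities give
\[
E(\Pi_x)\ \leq\ \liminf_{k\to\infty} E(\Pi_{x_k}),
\]
which is the desired lower semi-continuity. Finally, since $\partial\Omega$ is compact and $E(\Pi_\cdot)$ is lower semi-continuous with values $>-\infty$ at each point, the infimum $\sE(\Omega)=\inf_{x\in\partial\Omega}E(\Pi_x)$ is attained, hence finite.

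The main obstacle I expect is the continuity step: the map $x\mapsto \Pi_x$ is genuinely discontinuous in the naive pointwise sense (already on polygons), so one must work in the coarser singular-chain topology of \cite{BoDauPof14}, carefully check that local charts of a corner domain transport quadratic forms between $\cQ[\Pi_{x_k}]$ and $\cQ[\Pi_\infty]$ up to vanishing errors, and verify that the resulting notion of convergence is compatible with the recursive descent inequality at every level.
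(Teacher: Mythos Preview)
Your overall strategy—monotonicity along singular chains plus continuity of $\Pi\mapsto E(\Pi)$ for the distance $\dD$, fed into the abstract lower-semi-continuity machinery of \cite[Section 3]{BoDauPof14}—is exactly the route the paper takes. However, there is a genuine logical gap in how you organize the argument: you invoke Theorem~\ref{T:Efiniessspectrum} as an already-established black box, but in the paper's architecture Theorem~\ref{T:Efiniessspectrum} is proved \emph{after} Theorem~\ref{T:inffinite}, not before. More precisely, the two assertions of Theorem~\ref{T:Efiniessspectrum} are first established only conditionally, as Lemmas~\ref{L:coneavecHyppoH1} and~\ref{L:specessprove}, under the extra hypothesis $\sE(\omega)>-\infty$ on the section $\omega$ of the cone. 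That hypothesis is precisely Theorem~\ref{T:inffinite} one dimension lower, so using Theorem~\ref{T:Efiniessspectrum} unconditionally to prove Theorem~\ref{T:inffinite} is circular as written.

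The fix is straightforward and is what the paper does: run an explicit induction on the ambient dimension $n$. The base case $n=1$ is trivial (tangent cones are half-lines or the full line, with energies $-1$ and $0$). For the inductive step, assume $\sE(\omega)>-\infty$ for every corner domain $\omega$ on an $(n{-}1)$-manifold; then Lemmas~\ref{L:coneavecHyppoH1} and~\ref{L:specessprove} apply to every $\Pi\in\gP_n$, and this is what gives both the pointwise finiteness of $E(\Pi_x)$ and the descent inequality you state in your first step. Your continuity argument (transporting quasi-minimizers between nearby cones) likewise relies on knowing $E(\Pi)>-\infty$, so it too must live inside the inductive step—this is the paper's Proposition in Section~\ref{SS:6.1}. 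With the induction in place, your three ingredients coincide with the paper's Corollaries~\ref{C:continuity_chains} and~\ref{C:monotonicity}, after which \cite[Theorem~3.25]{BoDauPof14} is simply cited rather than re-proved as in your third paragraph.
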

To get an asymptotics of $\lambda(\Omega,\alpha)$ with control of the remainders, we need to control error terms when using change of variables and cut-off functions. However, the principal curvatures of the regular part of a corner domain may be unbounded in dimension $n\geq3$ (think of a circular cone), so the standard estimates when using approximation of metrics may blow up. We use a multiscale analysis to overcome this difficulty and we get the following result:
\begin{theorem}
\label{T:AsReste}
Let $\Omega\in \gD(M)$ with $n\geq2$ the dimension of $M$. Then there exists $\alpha_{0}\in \R$, two constants $C^{\pm}>0$ and two integers $0 \leq \overline{\nu} \leq \overline{\nu}_{+} \leq n-2$ such that
$$\forall \alpha \geq \alpha_{0}, \quad -C^{-}\alpha^{2 - \frac{2}{2\overline{\nu}_{+}+3}} \leq \lambda(\Omega,\alpha)-\alpha^2 \sE(\Omega) \leq C^{+} \alpha^{2-\frac{2}{2\overline{\nu}+3}}$$
\end{theorem}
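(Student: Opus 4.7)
The plan is to establish matching upper and lower bounds of the form $\alpha^2\sE(\Omega) + O(\alpha^{2-\gamma})$, exploiting the elementary scaling $\cQ_\alpha[\Pi] = \alpha^2 \cQ_1[\Pi]$ to reduce questions on model cones to unit Robin parameter, and then a multiscale analysis of $\overline{\Omega}$ modelled on \cite{BoDauPof14}. The integers $\overline{\nu}$ and $\overline{\nu}_+$ should be read as the depth of the recursive singular chain at an infimizing point of \eqref{defsE}, respectively the maximum such depth attained on $\overline{\Omega}$; both are at most $n-2$ since one level of recursion consumes at least one dimension and only cones of dimension $\geq 2$ carry nontrivial corner structure.

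For the upper bound, Theorems \ref{T:Efiniessspectrum} and \ref{T:inffinite} together with the lower semi-continuity of $x \mapsto E(\Pi_x)$ on the compact set $\overline{\Omega}$ produce a point $x_0 \in \partial\Omega$ with $E(\Pi_{x_0}) = \sE(\Omega)$. I would pick an almost-ground-state $\psi$ of $\cQ_1[\Pi_{x_0}]$, truncate it in a ball of radius $R$ around the apex at a cost controlled by Theorem \ref{T:Efiniessspectrum}(2) (which ensures that $E(\Pi_{x_0})$ is separated from the essential spectrum up to $\epsilon$, or that suitable Weyl quasimodes exist), rescale by $\alpha$ to obtain a test function of characteristic length $\alpha^{-1}$, transport it to $\Omega$ near $x_0$ through coordinates that straighten the tangent cone, and localize it further by a mesoscopic cutoff of scale $\alpha^{-\sigma}$ with $\sigma \in (0,1)$. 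The Rayleigh quotient then splits into the principal term $\alpha^2 \sE(\Omega)$, a cutoff commutator of order $\alpha^{2\sigma}$, and a metric/boundary deformation of order $\alpha^{2-\sigma}$; iterating this construction inside the recursive tangent chain of depth $\overline{\nu}$ modifies the exponent balance and yields $2 - 2/(2\overline{\nu}+3)$.

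For the lower bound I would apply the IMS localization formula with a multiscale partition of unity matched to the stratification of $\overline{\Omega}$: nested cutoffs of radii $\rho_j = \alpha^{-\sigma_j}$ with $0 < \sigma_0 < \sigma_1 < \cdots < \sigma_{\overline{\nu}_+} < 1$, smaller near lower-dimensional strata. On each piece one identifies $\Omega$ with its tangent cone through a diffeomorphism, uses scaling and Theorem \ref{T:Efiniessspectrum} to control the local quadratic form from below by $\alpha^2 E(\Pi_x) \geq \alpha^2 \sE(\Omega)$, and bounds the resulting pullback errors of order $\rho_j$ per level together with cutoff commutator errors of order $\sum_j \rho_j^{-2}$. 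Solving the recursive balance provides the exponent $2 - 2/(2\overline{\nu}_+ + 3)$; it is worse than the upper bound because here the estimate must be valid at every point of $\overline{\Omega}$, whereas the quasimode of the upper bound concentrates near a single optimal point.

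The main obstacle is the lower bound: the diffeomorphism straightening $\Omega$ onto its tangent cone has an error involving curvatures or second fundamental forms that may blow up as one approaches deeper singular strata (think of a circular cone in dimension three already), so no single scale can close the estimate. The multiscale partition must therefore be designed so that on each scale $\rho_j$ the geometric error is $O(\rho_j)$ times the principal part $\alpha^2$, in a way compatible with the recursive structure of $\gD(M)$; this is exactly what generates the exponent $1/(2\overline{\nu}_+ + 3)$. A secondary delicate point in the upper bound is that at an infimizing $x_0$ the energy $E(\Pi_{x_0})$ may lie at the bottom of the essential spectrum of $L[\Pi_{x_0}]$ with no $L^2$ minimizer, so one must throughout use Weyl-type quasimodes on the model cone rather than true eigenfunctions.
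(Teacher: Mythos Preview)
Your lower bound strategy is essentially the paper's: IMS localization with a multiscale partition adapted to the stratification, straightening onto tangent cones, and balancing cutoff errors $O(\alpha^{2\delta})$ against metric errors $O(\alpha^{2-\delta_k})$ to arrive at the exponent $2-2/(2\overline{\nu}_+ +3)$. One correction: $\overline{\nu}_+$ (and likewise $\overline{\nu}$) is not the chain \emph{depth} but the smallest length after which all tangent substructures become \emph{polyhedral}; the point is that once a cone is polyhedral the local chart is affine and $\kappa$ no longer blows up, so no further scale refinement is needed. This is why the exponent depends on polyhedrality rather than on the raw singular-chain length.

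The genuine gap is in your upper bound. You correctly anticipate that $E(\Pi_{x_0})$ may sit at the bottom of the essential spectrum of $L[\Pi_{x_0}]$, but your proposed cure --- Weyl-type quasimodes on $\Pi_{x_0}$ --- is problematic: such sequences escape to infinity in the cone, so after rescaling by $\alpha$ their supports need not fit inside a fixed map-neighborhood of $x_0$, and you lose control of the metric error. The paper's resolution is different and is the heart of the argument: a dichotomy (Proposition~\ref{P:Dicho}) shows that one can always extend the chain $(x_0)$ to $(x_0,\ldots,x_\nu)$ so that on the deeper tangent structure $\Pi_{\dx_\nu}$ the energy $E(\Pi_{\dx_\nu})=\sE(\Omega)$ is a \emph{discrete} eigenvalue of the reduced-cone operator, with a genuine exponentially decaying eigenfunction. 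The quasimode is then built on $\Pi_{\dx_\nu}$ from this eigenfunction and transported back up the chain step by step (``sliding'' quasimodes), each step costing a controlled metric error governed by $\kappa(v_k)\lesssim d_k^{-1}$ until the cones become polyhedral. Your phrase ``iterating inside the recursive tangent chain'' gestures at this, but the mechanism is not iteration of a Weyl construction; it is a single true eigenfunction found at the right depth, followed by a sequence of changes of variables back to $\Omega$.
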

The constant $\overline{\nu}$ corresponds to the degree of degeneracy of the curvatures near the minimizers of the local energy, its precise definition can be found in \eqref{D:nubarre}. The constant $\overline{\nu}_{+}$ describes the degeneracy of the curvatures globally in $\overline{\Omega}$, see Lemma \ref{L:partitionunity}. In particular, when $\Omega$ is polyhedral (that is a domain with bounded curvatures on the regular part), then $\overline{\nu}=\overline{\nu}_{+}=0$.

The proof of the lower bound relies on a multiscale partition of the unity where the size of the balls optimizes the error terms. 
The upper bound is less classical: using the concept of {\it singular chain},  we isolate a tangent "sub-reduced-cone"  for which the bottom of the spectrum corresponds to an isolated eigenvalue (below the essential spectrum). Then we construct recursive quasi-modes, coming from this  tangent "sub-reduced-cone". 
%Given $x\in \overline{\Omega}$, the dichotomy between strict and large inequality in \eqref{Eq:lscintro} is one of the main technical difficulties  in the construction of localized quasi-mode near $x$. We will provide a solution to this issue in Section \ref{S:UB} by constructing recursive quasi-modes, coming from a given tangent structure.

Note finally that for regular domains, more precise asymptotics involving the mean curvature can be found (\cite{Pank13,HeKa15} in dimension 2 and \cite{PankPof15,PankPof16} for higher dimension). A precise analysis is also done for particular polygonal geometries: tunneling effect in some symmetry cases (\cite{HePank15}), and reduction to the boundary when the domain is the exterior of convex polygon (\cite{Pank15}). In all these cases, the local energy is piecewise constant, and new geometric criteria appear near the set of minimizer. In fact, the local energy can be seen as a potential in the standard theory of the harmonic approximation (\cite{DiSj99}) and under additional hypothesis on the local energy, it is reasonable to expect more precise asymptotics in higher dimensions.
 For polygons (dimension 2), another approach would consist in comparing the limit problem to a problem on a graph, in the spirit of \cite{Gri08}, the nodes (resp. the edges) corresponding to the vertices (resp. the sides) of the polygons. But it is not clear how such an approach could be generalized in any dimension.

\subsection{Applications of the method for the Schr\"odinger operator with $\delta$-interaction}

 Let $\Omega \in \gD(M)$ be a corner domain and let $S=\partial \Omega$ be its boundary. We consider $L^{\delta}_{\alpha}[M,S]$ the self-adjoint extension associated with the quadratic form
$$\cQ^{\delta}_{\alpha}[M,S](u):=\| \nabla u\|_{L^2(M)}^2-\alpha \|u\|_{L^2(S)}^2, \quad u\in H^1(M).$$
The associated boundary problem is the Laplacian with the derivative jump condition  across the closed hypersurface $S$: $[\partial_{\nu} u]_{\partial \Omega} = \alpha u$.
It is well known (see e.g. \cite{BrExKuSe94}) that since $S$ is bounded, $L^{\delta}_{\alpha}[\R^n,S]$ is a relatively compact perturbation of $L_0= -\Delta$ on $L^2(\R^n)$ and then
$$\sigma_{ess}\Big( L^{\delta}_{\alpha}[\R^n,S] \Big) = \sigma_{ess}( L_0 )= [0, + \infty).$$
Moreover $L^{\delta}_{\alpha}[\R^n,S]$ has a finite number of negative eigenvalues. If we denote by $\lambda^{\delta}(S, \alpha)$ the lowest one, by applying our technics developed for the Robin Laplacian, all the above results are still valid replacing $\lambda(\Omega, \alpha)$ by $\lambda^{\delta}(S, \alpha)$. In particular for $x\in S$, the tangent cone to $\Omega$ at $x$ is $\Pi_{x}$, and its boundary is denoted by $S_{x}$. We still define the tangent operator as $L^{\delta}_{1}[\R^n,S_{x}]$, and the associated local energy at $x$, $E^{\delta}(S_{x})$, and their infimum $\sE^{\delta}(S)$. Then 
\begin{theorem}
Theorems \ref{T:Efiniessspectrum}--\ref{T:AsReste} remain valid when replacing the Robin Laplacian $L_{\alpha}[\Omega]$ by the $\delta$-interaction Laplacians $L^{\delta}_{\alpha}[M,S]$, $\lambda(\Omega, \alpha)$ by $\lambda^{\delta}(S, \alpha)$, $E(\Pi_{x})$ by $E^{\delta}(S_{x})$ and $\sE(\Omega)$ by $\sE^{\delta}(S)$.
\end{theorem}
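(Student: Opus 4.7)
The plan is to run through the three theorems in turn, relying on the following structural identity that reduces the $\delta$-interaction set-up to two coupled Robin problems: for any $u\in H^{1}(\R^{n})$, the interior and exterior traces of $u$ on $S=\partial\Omega$ coincide, and a direct computation gives
\[
\cQ^{\delta}_{\alpha}[\R^{n},S](u)=\cQ_{\alpha/2}[\Omega](u|_{\Omega})+\cQ_{\alpha/2}[\R^{n}\setminus\overline{\Omega}](u|_{\R^{n}\setminus\overline{\Omega}}).
\]
Moreover the dilation $U_{\alpha}u(x)=\alpha^{n/2}u(\alpha x)$ intertwines $\cQ^{\delta}_{\alpha}[\R^{n},S]$ with $\alpha^{2}\cQ^{\delta}_{1}[\R^{n},S]$ whenever $S$ is a cone, so the scaling employed throughout Section \ref{S:intro1} is preserved and the tangent model at a point $x\in S$ is $L^{\delta}_{1}[\R^{n},S_{x}]$.

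For the analog of Theorem \ref{T:Efiniessspectrum}(1) I would specialize the identity above to the tangent cone $\Pi_{x}$, drop the constraint that interior and exterior traces match, and obtain
\[
E^{\delta}(S_{x})\ \ge\ \tfrac{1}{4}\min\bigl(E(\Pi_{x}),\ E(\R^{n}\setminus\overline{\Pi_{x}})\bigr).
\]
The complement of an admissible cone lies again in $\gP_{n}$, since its tangent substructures are, up to a reflection, the complements of those of $\Pi_{x}$, so the recursive definition of \cite{Dau88} is stable under complementation. Theorem \ref{T:Efiniessspectrum}(1) applied to both sides then yields $E^{\delta}(S_{x})>-\infty$. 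For part (2), the same proof carries over: an Agmon-type radial partition reduces, after rescaling by the distance to the origin, to the $\delta$-interaction on $\R^{n}$ supported on the conification of $\omega_{S}=S_{\Gamma}\cap\dS^{n-1}$ with a large effective coupling, giving $\inf\sigma_{\mathrm{ess}}(L^{\delta}_{1}[\R^{n},S_{\Gamma}])=\sE^{\delta}(\omega_{S})$.

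The analog of Theorem \ref{T:inffinite} then follows from the framework of \cite[Section 3]{BoDauPof14} exactly as in the Robin case: what is needed is lower semi-continuity of $x\mapsto E^{\delta}(S_{x})$ along singular chains together with a monotonicity statement, both inherited from their Robin counterparts through the decomposition above, since a singular chain of $S$ is the boundary of a singular chain of $\Omega$.

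Finally, the asymptotic expansion of Theorem \ref{T:AsReste} is re-proved by the same multiscale argument. The lower bound uses a partition of unity on $\R^{n}$ whose ball sizes are tuned to the local degeneracy $\overline{\nu}_{+}$ of the curvatures of $S$; this step is in fact cleaner than in the Robin case because cut-off functions on the ambient space need not respect any boundary. The upper bound is the main obstacle: one must transport a ground state from the reduced tangent sub-cone into an $H^{1}(\R^{n})$ quasi-mode, which forces a gluing of interior and exterior pieces across $S$ to produce a globally $H^{1}$ function. This gluing reduces, via the displayed decomposition, to the Robin gluing already carried out in the body of the paper, so the remainder estimates with the same exponents $\overline{\nu}$ and $\overline{\nu}_{+}$ transfer verbatim.
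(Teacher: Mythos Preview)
Your decomposition
\[
\cQ^{\delta}_{\alpha}[\R^{n},S](u)=\cQ_{\alpha/2}[\Omega](u|_{\Omega})+\cQ_{\alpha/2}[\R^{n}\setminus\overline{\Omega}](u|_{\R^{n}\setminus\overline{\Omega}})
\]
is correct and, after dropping the trace-matching constraint, yields the crude lower bound
\[
E^{\delta}(S_{x})\ \ge\ \tfrac14\min\bigl(E(\Pi_{x}),\,E(\R^{n}\setminus\overline{\Pi_{x}})\bigr),
\]
which is indeed enough for the finiteness assertion in Theorem~\ref{T:Efiniessspectrum}(1). This is a pleasant shortcut the paper does not take. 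However, this inequality is \emph{not sharp} in general, and that is where your reduction breaks down.

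The gap is that the remaining statements are about $E^{\delta}$ itself, not about the Robin quantity on either side. The identification of the essential spectrum threshold in Theorem~\ref{T:Efiniessspectrum}(2) as $\sE^{\delta}(\omega)$, the monotonicity along chains (Corollary~\ref{C:monotonicity}), the continuity in the distance $\dD$, and above all the leading term $\alpha^{2}\sE^{\delta}(S)$ in Theorem~\ref{T:AsReste}, all require the genuine $\delta$-interaction local energy. A one-sided bound of $E^{\delta}$ by a Robin expression gives neither continuity nor monotonicity of $E^{\delta}$ (a lower-semicontinuous lower bound says nothing about lower-semicontinuity of the function itself), so the phrase ``inherited \ldots\ through the decomposition above'' does not stand. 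Likewise, for the upper bound in Theorem~\ref{T:AsReste} you need a quasi-mode whose Rayleigh quotient approaches $\alpha^{2}E^{\delta}(S_{x_{0}})$; a Robin eigenfunction on $\Pi_{x_{0}}$ extended across $S$ neither lies in $H^{1}(\R^{n})$ (zero extension) nor has controlled exterior energy (generic $H^{1}$ extension), and in any case targets $\tfrac14 E(\Pi_{x_{0}})$ rather than $E^{\delta}(S_{x_{0}})$. So the claim that the gluing ``reduces \ldots\ to the Robin gluing'' is not correct.

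The paper's route is different and more direct: it observes that the quadratic form $\cQ^{\delta}_{\alpha}$ has exactly the same structure as $\cQ_{\alpha}$ (a Dirichlet integral minus a surface $L^{2}$-term), so every technical ingredient --- Lemma~\ref{L:CV} on change of variables, the metric-perturbation estimates \eqref{E:approxmetgen}--\eqref{E:approxnormgenerique}, Lemma~\ref{L:Estimatesmetrics}, the radial decomposition of Lemma~\ref{L:UBloin}, the IMS/multiscale argument of Section~\ref{S:LB}, and the recursive quasi-mode construction of Section~\ref{S:UB} --- transfers verbatim once one replaces $\Omega$ by the ambient manifold $M$ and $\partial\Omega$ by the hypersurface $S$. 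In particular the tangent generalized eigenfunction in Proposition~\ref{P:Dicho} is, for the $\delta$-problem, already a function on all of $\R^{n}$, so no interior/exterior gluing arises. Where you write ``the same proof carries over'' you are in fact on the paper's track; where you invoke the decomposition to transfer sharp statements, the argument fails.
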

When $x$ belongs to the regular part of $S$, $S_{x}$ is an hyperplane and 
\beq
\label{E:}
E^{\delta}(\R^n,S_{x})=E^{\delta}(\R, \{0\})=-\tfrac14,
\ee
see \cite{ExYo02}. Therefore $\sE^{\delta}(S)=-\frac14$ when $S$ is regular, and we obtain the known main term of the asymptotic expansion of $\lambda^{\delta}(S, \alpha)$ proved in dimension $2$ or $3$ (see \cite{ExYo02,  ExPa14, DiExKuPa15}). 

To our best knowledge the only studies for $\delta$-interactions supported on non smooth hypersurfaces are for broken lines and conical domains with circular section (see \cite{BerExLot14, DuRa14,ExKon15,LotOur16}). In that case, we clearly have $\sigma( L^{\delta}_{\alpha}[\R^n,S] ) =   \alpha^2 \; \sigma( L^{\delta}_{1}[\R^n,S] )$ (see Lemma \ref{L:scalings}), and it is proved in the above references that the bottom of the essential spectrum of $L^{\delta}[\R^n,S]$ is $- 1/4$. In view of our result, it remains true  when the section of the conical surface is smooth. 
Moreover, our work seems to be the first result giving the main asymptotic behavior of $ \lambda^{\delta}(S, \alpha)$ for interactions supported by general closed hypersurfaces with corners.

\begin{remark}
\label{R:weight}
For the Robin Laplacian and  the $\delta$-interaction Laplacian, we can add a smooth positive weight function $G$ in the boundary conditions. These conditions become, for the Robin condition $\partial_\nu u = \alpha G(x)  u$ and for the $\delta$-interaction case, $[\partial_{\nu} u ] = \alpha G(x)  u$. In our analysis, for $x \in \partial \Omega$ fixed, we have only to change $\alpha$ into $\alpha G(x)$ and clearly, the results are still true by replacing  $\sE(\Omega)$ and $\sE^\delta(S)$ by:
$$ \sE_G( \Omega):=\inf_{x\in \partial \Omega}G(x)^2 E(\Pi_{x}), \qquad \sE_G^\delta(S):=\inf_{x\in S}G(x)^2 E^\delta (S_{x}).$$
For the Robin Laplacian, such cases were already considered in \cite{LevPar08} and \cite{CoGa11}.
\end{remark}

\subsection{Organisation of the article}
In Section \ref{S:CD}, we recall the definitions of corner domains, in the spirit of \cite{Dau88,MazPla77}, and we give some properties proved in \cite{BoDauPof14}. Section \ref{S:CV} is devoted to the effects of perturbations on the quadratic form of the Robin Laplacian. It contains several technical Lemmas used in the following sections.

Section \ref{S:LB} contains the proof of the lower bound of Theorem \ref{T:AsReste}. 
It is based on a multiscale analysis in order to counter-balance the possible blow-up of curvatures in corner domains. 
In particular it involves the lower bound $\liminf_{\alpha\to+\infty}\lambda(\Omega,\alpha)/\alpha^2 \geq \sE(\Omega)$
 in any dimension, which is also used in Sections \ref{S:TO} and \ref{S:Sel}. Notice that in Section \ref{S:LB}, at this stage of the analysis, the quantity $\sE(\Omega)$ is still not known to be finite, its finiteness will be the recursive hypothesis of the next two sections.

Section \ref{S:TO} is a step in a recursive proof of  Theorem \ref{T:inffinite} developed in Section \ref{S:Sel}. Then when the finiteness of $\sE( \Omega)$ is  stated, Theorem \ref{T:Efiniessspectrum} is a direct consequence of Lemmas   \ref{L:coneavecHyppoH1} and \ref{L:specessprove} (see the end of Section \ref{SS:6.1}).

In Section \ref{S:UB}, we prove the upper bound of Theorem \ref{T:AsReste}. This is done by exploiting the results of Section \ref{S:Sel} in order to find a tangent problem that admits an eigenfunction associated with $\sE(\Omega)$. Then we construct recursive quasi-modes, qualified either of {\it sitting} or {\it sliding}, from the denomination of \cite{BoDauPof14}. 

 In Section \ref{S:Applications} we give two possible applications of our results.  A purely mathematical one concerns optimal estimates in compact injections of Sobolev spaces. In the second one we recall how from the study of $\lambda(\Omega, \alpha)$ we derive properties on the critical temperature for zero fields for systems with enhanced surface superconductivity (where  $\alpha^{-1}$ is related to the penetration depth).

\section{Corner domains}
\label{S:CD}
Here we give some background of so-called admissible corner domains, see \cite{Dau88,BoDauPof14}.
\subsection{Tangent cones and recursive class of corner domains}
Let $M$ be a Riemannian manifold without boundary. We define recursively the class of admissible corner domains $\gD(M)$ and admissible cones $\gP_{n}$, in the spirit of \cite{Dau88}: 

Initialization: $\gP_0$ has one element, $\{0\}$.
$\gD(\dS^0)$ is formed by all non empty subsets of $\dS^0$.

Recurrence: For $n\geq1$,
\begin{enumerate}
\item A cone $\Pi$ (see definition \ref{D:cones}) belongs to \Bk $\gP_n$ if and only if the section of $\Pi$ belongs to $\gD(\dS^{n-1})$,
\item $\Omega\in\gD(M)$ if and only if $\Omega$ is bounded, and for any $x\in\overline\Omega$, 
there exists a tangent cone $\Pi_{x}\in\gP_n$ to $\Omega$ at $x$.
\end{enumerate}
By definition, $\Pi_{x}$ is the tangent cone to $\Omega$ at $x\in \overline{\Omega}$ if there exists a local map $\psi_{x}:\cU_{x}\mapsto \cV_{x}$ where $\cU_{x}$ and $\cV_{x}$ are neighborhoods (called {\it map-neighborhoods})
of $x$ in $M$ and of 0 in $\R^{n}$ respectively, and $\psi_{x}$ is a diffeomorphism such that
\begin{equation}
\label{eq:diffeo}
\psi(x)=0
\quad\mbox{and}\quad
(\rd \psi)(x)=\Id
\quad\mbox{and}\quad
   \psi_{x}(\cU_{x}\cap\Omega) = \cV_{x}\cap\Pi_{x} 
   \quad\mbox{and}\quad
    \psi_{x}(\cU_{x}\cap\partial\Omega) = \cV_{x}\cap\partial\Pi_{x} .
\end{equation}
In dimension 2, cones are half-planes, sectors and the full plane. The corner domains are (curvilinear) polygons on $M$ with a finite number of vertices, each one of opening in $(0,\pi)\cup(\pi,2\pi)$. This includes of course regular domains. 

The key quantity in order to estimate errors when making a change of variables is
\beq
\label{D:curvature}
\kappa(x)=\|\rd\psi\|_{W^{1,\infty}(\cU_{x})}.
\ee
 It depends not only on $x$, but also on the choice of the local map. Note that, unlike for a regular domain, the curvature of the regular part of a corner domain may be unbounded (think of a circular cone). Therefore, $\kappa(x)$ is not bounded in general when picking an atlas of $\overline{\Omega}$. An important subclass of corner domains are those who are {\it polyhedral}: A cone is said to be polyhedral if its boundary is contained in a finite union of hyperplanes, and a domain is said polyhedral if all its tangent cones are polyhedral.
 
  As proven in \cite{BoDauPof14}, for a polyhedral domain, it is possible to find an atlas such that $\kappa$ is bounded. In the general case, we will have to control the possible blow-up of $\kappa$.

A list of examples can be found in \cite[Section 3.1]{BoDauPof14}. Let us recall that in dimension 2, all cones are polyhedral and therefore so are all corner domains,
but this is not true anymore when $n\geq3$: circular cones are typical examples of cones which are not polyhedral.

\subsection{Singular chains}
\label{SS:Sc}
For $x_{0}\in\overline{\Omega}$, we denote by $\Gamma_{x_{0}}\in \gP_{d_{0}}$ the reduced cone of $\Pi_{x_{0}}$, see Definition \ref{D:cones}, and $\omega_{x_{0}}$ the section of $\Gamma_{x_{0}}$. A singular chain $\dx=(x_{0},\ldots,x_{p})$ is a sequence of points, with $x_{0}\in \overline{\Omega}$, $x_{1}\in \overline{\omega_{x_{0}}}$, and so on. We denote by $\gC(\Omega)$ the set of singular chains (in $\Omega$), $\gC_{x_{0}}(\Omega)$ the set of chains initiated at $x_{0}$ and $\gC_{x_{0}}^{*}(\Omega)$ the set of $\dx\in \gC_{x_{0}}(\Omega)$ such that $\dx\neq(x_{0})$. We denote by $l(\dx)$ the integer $p+1$ which is the length of the chain. Note that $1\leq l(\dx) \leq n+1$, and that $l(\dx) \geq2$ when $\dx\in \gC^{*}_{x_{0}}(\Omega)$. 

With a chain $\dx$ is canonically associated a cone, denoted by $\Pi_{\dx}$, called a tangent structure: 
\begin{itemize}
\item When $\dx=(x_{0})$, then $\Pi_{\dx}=\Pi_{x_{0}}$.
\item When $\dx=(x_{0},x_{1})$, write as above, in some adapted coordinates, $\Pi_{x_{0}}=\R^{n-d_{0}}\times \Gamma_{x_{0}}$. Let $C_{x_{1}}$ be the tangent cone to $\omega_{x_{0}}$ at $x_{1}$. Then, in the adapted coordinates, $\Pi_{\dx}=\R^{n-d_{0}}\times \langle x_{1} \rangle \times C_{x_{1}}$, where $\langle x_{1} \rangle$ is the vector space spanned by $x_{1}$ in $\Gamma_{x_{0}}$.
\item And so on for longer chains.
\end{itemize}

 We refer to \cite[Section 3.4]{BoDauPof14} for complete definitions. Since singular chains are one of the tools of our analysis, we provide below some examples for a better understanding. In these examples, we assume for simplicity that $\Pi_{x_{0}}$ is irreducible 
\begin{itemize}
\item If $x_{1}\in \Pi_{x_{0}}$ (interior point), then $\Pi_{(x_{0},x_{1})}$ is the full space.
\item If $x_{1}$ is in the regular part of the boundary of $\omega_{x_{0}}$, then $C_{x_{1}}$ is a half-space of $\R^{n-1}$, and $\Pi_{(x_{0},x_{1})}$ is a half-space of $\R^{n}$. In particular for a cone with regular section, all chains of length 2 are associated either with a half-space or the full space. The chains of length 3 are associated with the full space, and there are no longer chains.
\item If $\Pi_{x_{0}}\subset\R^{3}$ is such that its section is a polygon and if $x_{1}$ is one of its vertices, then $C_{x_{1}}$ is a two-dimensional sector, and $\Pi_{(0,x_{1})}$ is a wedge. If $x_{2}$ is on the boundary of the sector $C_{x_{1}}$, then $\Pi_{(x_{0},x_{1},x_{2})}$ is a half-space, but if $x_{2}$ is on the interior of the sector, then $\Pi_{(0,x_{1},x_{2})}=\R^{3}$.
\end{itemize}
Given a cone $\Pi\in \gP_{n}$, we will also consider chains of $\Pi$, for example chains in $\gC_{0}(\Pi)$ are of the form $(0,x_{1},\ldots)$, where $x_{1}$ belongs to the closure of the section of the reduce cone of $\Pi$.

The main idea is to consider the local energy as a function not only defined on $\overline{\Omega}$, but also on singular chains: $\gC(\Omega) \ni \dx\mapsto E(\Pi_{\dx})$. In order to show regularity properties of this function, we define a partial order on singular chain: we say that $\dx \leq \dx '$ if $l(\dx) \leq l(\dx')$, and for all $k \leq l(\dx)$, there hold $x_{k}=x_{k}'$. We also define a distance between cones through the action of isomorphisms:
\beq
\label{D:dd}
   \dD(\Pi,\Pi') =\frac12\bigg\{
   \min_{\substack {L\in\mathsf{BGL}_{n} \\ L\Pi = \Pi'}}
   \|L-\Id_n\|
      +\min_{\substack {L\in\mathsf{BGL}_{n} \\ L\Pi' = \Pi}}
   \|L-\Id_n\| \bigg\}\,,
\ee
where $\mathsf{BGL}_{n}$ is the ring of linear isomorphisms $L$ of $\R^{n}$ with norm $\|L\|\le1$. Note that by definition the distance between two cones is $+\infty$ if they do not belong to the same orbit for the action of $\mathsf{BGL}_{n}$ on $\gP_{n}$.

We define then the natural distance inherited on $\gC(\Omega)$ by $\dD(\dx,\dx')=\|x_{0}-x_{0}'\|+\dD(\Pi_{\dx},\Pi_{\dx'})$, see \cite[Definition 3.22]{BoDauPof14}.
Then \cite[Theorem 3.25]{BoDauPof14} states that any function $F:\gC(\Omega)\mapsto \R$, monotonous and continuous with respect to $\dD$, is lower semi-continuous when restricted to $\overline{\Omega}$ (which corresponds to chains of length 1). We will show these two criterion, see Corollary \ref{C:continuity_chains} and \ref{C:monotonicity}.

\section{Change of variables and perturbation of the metric}
\label{S:CV}
 This section contains mainly technical lemmas, which are useful in the following sections. \Bk  We define the operator with metric and we show the influence of a change of variable from a corner domains toward tangent cones on the quadratic form.

\subsection{Change of variables and operator with metrics}
We need to know how a change of variables transforms the quadratic form of the Robin Laplacian. Indeed we will consider diffeomorphisms $\psi : \cO \mapsto \cO'$, where $\cO$ and $\cO'$ are open set 
 in the following two situations: 
\begin{itemize}
\item Either $\cO$ and $\cO'$ will be cones in $\gP_{n}$ and $\psi$ will be a linear application of $\R^{n}$,
\item or $\cO$ and $\cO'$ will be map-neighborhoods, respectively of a point in a closure of a corner domain, and of 0 in the associated tangent cone.
\end{itemize}
This changes of variables will induce a regular metric $\rG: \cO'\mapsto GL_{n}$. In the case where $\psi$ is linear, $\rG$ will be constant.

 Let $L^{2}_{\rG}(\cO')$ be the space of the square-integrable functions for the weight $|\rG|^{-1/2}$, endowed with its natural norm $\|v\|_{L^2_{G}}:= \int_{\cO'}|v|^2 |\rG|^{-1/2}$. Due to the previous hypotheses, $L^{2}_{\rG}(\cO')=L^{2}(\cO')$. Let $g=\rG_{|\partial\cO'}$ be the restriction of the metric to the boundary. We introduce the quadratic form
 $$\cQ_{\alpha}[\cO',\rG](v)=\int_{\cO'}\langle \rG\nabla v,\nabla v\rangle |\rG|^{-1/2} -\alpha \int_{\partial\cO'}|v|^2|g|^{-1/2}.$$
Due to the above hypotheses on $\cO'$ and $\rG$, we can define this quadratic form on $H^{1}(\cO')$, endowed with the weighted norm $\|\cdot\|_{L^2_{G}}$.

\begin{lemma}
\label{L:CV}
Let $\cO$ and $\cO'$ be open sets and $\psi:\cO\mapsto \cO'$ a diffeomorphism as above. Let $\rJ:=\rd (\psi^{-1})$ be the Jacobian of $\psi^{-1}$ and $\rG:=\rJ^{-1}(\rJ^{-1})^{\top}$ the associated metric. Then for all $u\in H^{1}(\cO)$, there holds
$$\cQ_{\alpha}[\cO](u)=\cQ_{\alpha}[\cO',\rG](u\circ \psi^{-1}) \quad \mbox{and} \quad \|u\|_{L^2(\cO)}=\|u\circ \psi^{-1}\|_{L^2_{G}(\cO')}.$$ 
\end{lemma}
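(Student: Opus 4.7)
The plan is to reduce everything to a direct change of variables. Set $v := u\circ \psi^{-1}$, so $u = v\circ\psi$ and $v\in H^1(\cO')$. The three equalities to establish (the $L^2$ norm, the gradient term, and the boundary term) all follow from applying $y=\psi(x)$ and carefully tracking the Jacobian and the chain rule.

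First I would handle the $L^2$ identity. The change of variables $y=\psi(x)$ gives $dx = |\det \rd\psi^{-1}(y)|\,dy = |\det \rJ(y)|\,dy$. From $\rG = \rJ^{-1}(\rJ^{-1})^\top$ one gets $\det \rG = (\det \rJ)^{-2}$, hence $|\rG|^{-1/2} = |\det \rJ|$, and therefore
\[
\|u\|_{L^2(\cO)}^2 = \int_{\cO'} |v(y)|^2\, |\det \rJ(y)|\,dy = \int_{\cO'} |v(y)|^2\, |\rG(y)|^{-1/2}\,dy = \|v\|_{L^2_\rG(\cO')}^2.
\]
Next, for the gradient term, the chain rule applied to $v(y)=u(\psi^{-1}(y))$ yields $\nabla_y v(y) = \rJ(y)^\top \nabla_x u(\psi^{-1}(y))$, so $\nabla_x u = (\rJ^{-1})^\top \nabla_y v$ at the corresponding points. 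Squaring,
\[
|\nabla_x u|^2 = \bigl\langle (\rJ^{-1})^\top \nabla_y v,\, (\rJ^{-1})^\top \nabla_y v \bigr\rangle = \bigl\langle \rJ^{-1}(\rJ^{-1})^\top \nabla_y v,\, \nabla_y v \bigr\rangle = \langle \rG\, \nabla_y v, \nabla_y v\rangle,
\]
and combining with the volume Jacobian $|\rG|^{-1/2}$ from Step 1 gives the gradient part of $\cQ_\alpha[\cO',\rG](v)$.

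For the boundary term, I would apply the same reasoning to the restriction $\psi|_{\partial \cO}\colon \partial\cO \to \partial\cO'$, which is itself a diffeomorphism. The surface measure transforms as $d\sigma_x = J_\partial(y)\,d\sigma_y$, where $J_\partial$ is the surface Jacobian coming from the induced metric structure on $\partial\cO'$. A short linear-algebra computation, using an orthonormal frame for $T\partial\cO'$ and the fact that $g=\rG|_{\partial\cO'}$ is exactly the inverse of the induced covariant metric on the boundary, identifies $J_\partial(y) = |g(y)|^{-1/2}$. This yields $\|u\|_{L^2(\partial\cO)}^2 = \int_{\partial\cO'}|v|^2 |g|^{-1/2}\,d\sigma_y$, which combined with Steps 1 and 2 completes the identity $\cQ_\alpha[\cO](u) = \cQ_\alpha[\cO',\rG](v)$.

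The content is entirely standard; the only mildly delicate step is the boundary case, where one has to verify that the natural surface Jacobian is given by $|g|^{-1/2}$ with the convention that $g$ denotes the restriction of the contravariant tensor $\rG$. Everything else is a direct substitution and does not require any additional tools beyond the chain rule and the multilinear change-of-variables formula.
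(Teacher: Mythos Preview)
The paper states Lemma~\ref{L:CV} without proof, treating it as a standard change-of-variables identity; your proposal supplies exactly the routine computation one would expect, and the approach is correct. The only remark worth making is that the paper's notation $g=\rG_{|\partial\cO'}$ is somewhat informal (restricting a contravariant tensor to a submanifold is not literally the inverse of the induced covariant metric in general), but your proof correctly identifies the intended meaning---namely that $|g|^{-1/2}\,d\sigma_y$ is the pushforward surface measure---and flags this as the one mildly delicate point, which is appropriate.
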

Said differently, if we define $\rU: u\mapsto u\circ\psi^{-1}$, then $\rU$ is an isometry from $L^2(\cO)$ onto $L^2_{G}(\cO')$, and $\cQ_{\alpha}[\cO',\rG]\rU=\cQ_{\alpha}[\cO]$. We will also use scaling on cones: 
\begin{lemma}
\label{L:scalings}
Let $\Pi$ be a cone, and $u\in H^{1}(\Pi)$. For $\alpha>0$, we define $u_{\alpha}(x):=\alpha^{-\frac{n}{2}}u(\frac{x}{\alpha})$. Then
$$\|u_{\alpha}\|_{L^2}= \|u\|_{L^2}\quad \mbox{and}\quad \cQ_{\alpha}[\Pi](u)=\alpha^2\cQ[\Pi](u_{\alpha}).$$
In particular, $\cQ_{\alpha}[\Pi]$ and $\alpha^2\cQ[\Pi]$ are unitarily equivalent.
\end{lemma}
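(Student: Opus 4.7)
The plan is to prove everything by direct computation using the change of variables $y = x/\alpha$, relying crucially on the fact that both $\Pi$ and $\partial \Pi$ are dilation invariant (the latter being a consequence of the former, since $\Pi$ is a cone). The map $U_\alpha : u \mapsto u_\alpha$ is clearly linear and has inverse $U_{1/\alpha}$, so once I show it is an $L^2$-isometry and intertwines the quadratic forms up to a factor of $\alpha^2$, the unitary equivalence follows automatically.

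First I would check the $L^2$ identity: a direct substitution $y=x/\alpha$ in $\|u_\alpha\|_{L^2(\Pi)}^2 = \int_\Pi \alpha^{-n}|u(x/\alpha)|^2\,\rd x$ yields $\alpha^{-n}\cdot \alpha^n \int_\Pi |u(y)|^2\,\rd y$, where the fact that the image of $\Pi$ under $x\mapsto x/\alpha$ is again $\Pi$ is precisely the dilation invariance. Next I would treat the Dirichlet part of $\cQ$. Since $\nabla u_\alpha(x) = \alpha^{-n/2-1}(\nabla u)(x/\alpha)$, the same change of variables gives
\begin{equation*}
\|\nabla u_\alpha\|_{L^2(\Pi)}^2 = \alpha^{-n-2}\int_\Pi |(\nabla u)(x/\alpha)|^2\,\rd x = \alpha^{-2}\|\nabla u\|_{L^2(\Pi)}^2.
\end{equation*}

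The only step requiring a little care is the boundary term, since $\partial\Pi$ is $(n-1)$-dimensional. Parametrizing $\partial\Pi$ locally and using the dilation invariance of $\partial\Pi$, the induced surface measure scales as $\rd\sigma(x) = \alpha^{n-1}\rd\sigma(y)$ under $y=x/\alpha$, hence
\begin{equation*}
\|u_\alpha\|_{L^2(\partial\Pi)}^2 = \alpha^{-n}\int_{\partial \Pi}|u(x/\alpha)|^2\,\rd\sigma(x) = \alpha^{-1}\|u\|_{L^2(\partial\Pi)}^2.
\end{equation*}
Combining these two computations gives $\alpha^2\cQ[\Pi](u_\alpha) = \|\nabla u\|_{L^2(\Pi)}^2 - \alpha\|u\|_{L^2(\partial\Pi)}^2 = \cQ_\alpha[\Pi](u)$, which is the second claimed identity.

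Finally, to pass from these two identities to unitary equivalence, I would observe that $U_\alpha$ preserves $H^1(\Pi)$ (since $\partial_j u_\alpha = \alpha^{-1}(\partial_j u)_\alpha$ up to the $\alpha^{-n/2}$ normalization), so it restricts to a bijection on the form domain and intertwines the two quadratic forms exactly as needed. No step should present any real obstacle: the boundary-measure scaling is the only point worth flagging, and it follows from the fact that on any piece of $\partial\Pi$ the dilation $y = x/\alpha$ maps $\partial\Pi$ to itself and multiplies the $(n-1)$-dimensional Hausdorff measure by $\alpha^{n-1}$.
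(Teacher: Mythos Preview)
Your proof is correct. The paper states this lemma without proof, treating it as an elementary scaling computation; your direct change-of-variables argument is exactly the standard way to justify it, and all steps (including the $(n-1)$-dimensional scaling of the surface measure on $\partial\Pi$) are handled correctly.
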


\subsection{Approximation of metrics}
We will be led to consider situations where $\rJ-\Id$ is small (and so is $\rG-\Id$). Therefore, for $v\in H^{1}(\cO')$, we compute
$$\cQ_{\alpha}[\cO',\rG](v)-\cQ_{\alpha}[\cO'](v)=\int_{\cO'}\langle (\rG-\Id)\nabla v,\nabla v\rangle |\rG|^{-1/2}+ \int_{\cO'} |\nabla v|^2(|\rG|^{-1/2}-1)+\alpha\int_{\partial\cO'}|v|^2(|g|^{-1/2}-1) $$
and therefore 
\begin{multline*}
|\cQ_{\alpha}[\cO',\rG](v)-\cQ_{\alpha}[\cO'](v)| \leq  \left(\|\rG-\Id\|_{L_v^{\infty}}(\||\rG|^{-1/2}-1\|_{L_v^{\infty}}+1)+\||\rG|^{-1/2}-\Id\|_{L_v^{\infty}} \right) \|\nabla v \|_{L^2}^2
\\+\alpha \||g|^{-1/2}-1\|_{L_v^{\infty}}\|v\|_{L^2(\partial\cO')}
\end{multline*}
where $\|\cdot \|_{L_v^\infty}$ denotes the $L^{\infty}$ norm on $\supp(v)$. Assume now that $\|\rJ-\Id\|_{L_v^\infty}\leq 1$, then there exists a universal constant $C>0$ such that 
\begin{equation}
\label{E:UBRough}
|\cQ_{\alpha}[\cO',\rG](v)-\cQ_{\alpha}[\cO'](v)| \leq C \|\rJ-\Id\|_{L_v^\infty} (\|\nabla v \|_{L^2}^2+\alpha \|v\|_{L^2(\partial\cO')}).
\end{equation}
This may be written as 
\begin{align*}
(1- C \|\rJ-\Id\|_{L_v^\infty})\|\nabla v \|_{L^2}^2-\alpha (1+C \|\rJ-\Id\|_{L_v^\infty}) \|v\|_{L^2(\partial\cO')}) \leq
 \cQ_{\alpha}[\cO',\rG](v)
  \\
  \leq (1+ C \|\rJ-\Id\|_{L_v^\infty})\|\nabla v \|_{L^2}^2-\alpha (1-C \|\rJ-\Id\|_{L_v^\infty}) \|v\|_{L^2(\partial\cO')})
\end{align*}
That is, for $\|\rJ-\Id\|_{L_v^\infty}$ small enough:
\begin{multline}
\label{E:approxmetgen}
(1- C \|\rJ-\Id\|_{L_v^\infty})\left(\|\nabla v \|_{L^2}^2-\alpha \frac{1+C \|\rJ-\Id\|_{L_v^\infty}}{1- C \|\rJ-\Id\|_{L_v^\infty}} \|v\|_{L^2(\partial\cO')})\right) \leq
 \cQ_{\alpha}[\cO',\rG](v)
\\
  \leq (1+ C \|\rJ-\Id\|_{L_v^\infty})\left(\|\nabla v \|_{L^2}^2-\alpha \frac{1-C \|\rJ-\Id\|_{L_v^\infty}}{1+C \|\rJ-\Id\|_{L_v^\infty}} \|v\|_{L^2(\partial\cO')}\right)
\end{multline}
Similarly, we have a norm approximation: Assuming that $\|\rJ-\Id\|_{L_v^\infty} \leq 1$,
\begin{equation}
\label{E:approxnormgenerique}
\forall v\in L^{2}(\cO'), \quad (1-C \|\rJ-\Id\|_{L_v^\infty}) \|v\|_{L^2} \leq \|v\|_{L^2_{G}} \leq (1+C \|\rJ-\Id\|_{L_v^\infty}) \|v\|_{L^2}.
\end{equation}
By applying the previous inequality on tangent geometries with a constant metric, we will deduce the continuity of the local energy on strata in Section \ref{SS:6.1}.

\subsection{Functions with small support}
The following lemma compares the quadratic form with a metric to the one without metric for functions concentrated near the origin of a tangent cone:
\begin{lemma}
\label{L:Estimatesmetrics}
Let $\Omega \in \gD(M)$, let $x_{0}\in \overline{\Omega}$, and let $\psi_{x_{0}}:\cU_{x_{0}}\mapsto \cV_{x_{0}}$ be a map-neighborhood of $x_{0}$. Let $\rG$ be the associated metric, defined in Lemma \ref{L:CV}. Then there exists universal positive constants $c$ and $C$ such that for all $r \in (0,\frac{c}{\kappa(x_{0})})$ with $\cB(0,r)\subset \cV_{x_{0}}$, for all $v\in H^{1}(\Pi_{x_{0}})$ compactly supported in $\cB(0,r)$, there holds 
\begin{equation}
\label{E:estimateFQ}
(1-Cr\kappa(x_{0}))\cQ_{\alpha^{-}}[\Pi_{x_{0}}](v) \leq \cQ_{\alpha}[\Pi_{x_{0}},\rG](v)  \leq (1+Cr\kappa(x_{0}))\cQ_{\alpha^{+}}[\Pi_{x_{0}}](v)
\end{equation}
where 
\begin{equation}
\label{D:alphapm}
\alpha^{\pm}(r,x_{0})=\alpha\frac{1\mp Cr\kappa(x_{0})}{1\pm Cr\kappa(x_{0})},
\end{equation}
and 
$$|\|v\|_{L^2}-\|v\|_{L^2_{G}}| \leq  Cr\kappa(x_{0})\|v\|_{L^2}.$$
Here $\kappa(x)$
is defined in \eqref{D:curvature}.
\end{lemma}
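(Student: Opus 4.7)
The plan is to reduce everything to the generic estimates \eqref{E:approxmetgen} and \eqref{E:approxnormgenerique}, whose right-hand sides depend on $\|\rJ-\Id\|_{L_v^\infty}$. The whole task therefore comes down to proving
$$\|\rJ-\Id\|_{L^\infty(\cB(0,r))}\leq C r\,\kappa(x_{0})$$
for a universal constant $C$, provided $r\kappa(x_0)$ is smaller than a universal threshold. Once this is established, one substitutes the bound $\|\rJ-\Id\|_{L_v^\infty}\leq Cr\kappa(x_0)$ into \eqref{E:approxmetgen}; after reading off the factor in front of $\alpha$, the middle term becomes $\cQ_{\alpha^{\pm}}[\Pi_{x_0}](v)$ with $\alpha^{\pm}$ defined by \eqref{D:alphapm}, and the estimate \eqref{E:estimateFQ} follows. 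The $L^2$--$L^2_G$ comparison is obtained in the same way from \eqref{E:approxnormgenerique}.

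To prove the pointwise bound on $\rJ-\Id$, I would use the normalization encoded in \eqref{eq:diffeo}: since $\psi_{x_0}(x_0)=0$ and $(\rd\psi_{x_0})(x_0)=\Id$, one has $\rJ(0)=(\rd\psi_{x_0})(x_0)^{-1}=\Id$, so by the mean value inequality
$$\|\rJ(y)-\Id\|\leq |y|\;\|\rd\rJ\|_{L^\infty(\cB(0,r))}\leq r\,\|\rd\rJ\|_{L^\infty(\cB(0,r))}, \qquad y\in\cB(0,r).$$
The derivative $\rd\rJ$ consists of second derivatives of $\psi^{-1}$, and the identity $\rJ(\psi(x))=(\rd\psi(x))^{-1}$ differentiated once gives a schematic formula of the form $\rd\rJ\circ\psi=-\rJ\cdot(\rd^2\psi)\cdot\rJ$. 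Because $(\rd\psi)(x_0)=\Id$ and $\rd\psi\in W^{1,\infty}$ with $\|\rd\psi\|_{W^{1,\infty}}=\kappa(x_0)$, after choosing the universal constant $c$ small enough we may ensure (by shrinking to $\cB(0,r)$ with $r<c/\kappa(x_0)$) that $\rd\psi$ stays within $\frac12$ of $\Id$ on $\psi^{-1}(\cB(0,r))$, so $\|\rJ\|$ is bounded by a universal constant there. Hence $\|\rd\rJ\|_{L^\infty(\cB(0,r))}\leq C\,\|\rd^2\psi\|_{L^\infty}\leq C\kappa(x_0)$, giving the desired pointwise bound.

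The main subtlety will be turning $\kappa(x_0)=\|\rd\psi\|_{W^{1,\infty}(\cU_{x_0})}$ into a control on $\rd\rJ$; this is precisely where the normalization $(\rd\psi)(x_0)=\Id$ combined with the smallness condition $r<c/\kappa(x_0)$ is crucial, since otherwise inverting $\rd\psi$ would produce an extra factor of $\|(\rd\psi)^{-1}\|$ that is not bounded by $\kappa(x_0)$ alone. Once this is settled, the rest is bookkeeping: the smallness condition on $r\kappa(x_0)$ also guarantees that the denominators in \eqref{D:alphapm} are bounded away from zero, so $\alpha^{\pm}$ is well-defined, and the conclusion follows by direct substitution.
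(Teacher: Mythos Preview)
Your proposal is correct and follows essentially the same route as the paper: bound $\|\rJ-\Id\|_{L^\infty(\cB(0,r))}$ by $Cr\kappa(x_0)$ via the mean value inequality using $\rJ(0)=\Id$, then plug into the generic estimates \eqref{E:approxmetgen} and \eqref{E:approxnormgenerique}. In fact you are more careful than the paper, which simply asserts $\|\rJ\|_{W^{1,\infty}}=\kappa(x_0)$ without justifying why the $W^{1,\infty}$ norm of $\rJ=\rd(\psi^{-1})$ is controlled by that of $\rd\psi$; your argument via $\rd\rJ\circ\psi=-\rJ\cdot(\rd^2\psi)\cdot\rJ$ and the smallness condition $r<c/\kappa(x_0)$ fills exactly this gap.
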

\begin{proof}
 Let $\rJ$ be the Jacobian of $\psi_{x_{0}}^{-1}$. Since $v$ is supported in a ball $\cB(0,r)$ and $\rJ(0)=\Id$, by direct Taylor inequality we get $\|\rJ-\Id\|_{L^{\infty}(\cB(0,r))} \leq r \|\rJ\|_{W^{1,\infty}(\cO)}=r\kappa(x_{0})$. We use \eqref{E:UBRough}, and we follow the same steps leading to \eqref{E:approxmetgen} and \eqref{E:approxnormgenerique}.
\end{proof}

\begin{remark}
\label{R:ineqeasy}
When the quadratic forms are negative, the above inequality implies
\begin{equation}
\label{E:estimateFQneg}
\cQ_{\alpha^{-}}[\Pi_{x_{0}}](v) \leq \cQ_{\alpha}[\Pi_{x_{0}},\rG](v)  \leq \cQ_{\alpha^{+}}[\Pi_{x_{0}}](v).
\end{equation}
\end{remark}

The following lemma will be useful when studying the essential spectrum of tangent operators:
\begin{lemma}
\label{L:UPEnergeps}
 Let $\Omega\in \gD(M)$ and let $x_{0}\in \overline{\Omega}$ such that $E(\Pi_{x_{0}})$ is finite. Let $\cU_{x_{0}}$ be a map-neighborhood of $x_{0}$. Then 
$$\limsup_{\alpha\to+\infty}\inf_{\substack{u\in H^{1}(\Omega), \ \|u\|=1 \\ \supp(u)\subset \cU_{x_{0}}}} \alpha^{-2}\cQ_{\alpha}[\Omega](u) \leq E(\Pi_{x_{0}}).$$
This property is still true if $\Omega\in \gP_n$.
\end{lemma}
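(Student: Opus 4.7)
The plan is to construct, for $\alpha$ large, an explicit test function $u_\alpha \in H^1(\Omega)$ localized in $\cU_{x_0}$ whose Rayleigh quotient approaches $\alpha^2 E(\Pi_{x_0})$. The ingredient is a quasi-minimizer of $E(\Pi_{x_0})$, concentrated near the origin by homogeneity and pulled back to $\Omega$ through the local chart $\psi_{x_0}$. Fix $\varepsilon > 0$: since $E(\Pi_{x_0})$ is finite, I choose $\tilde v \in H^1(\Pi_{x_0})\setminus\{0\}$ with $\cQ[\Pi_{x_0}](\tilde v) \le (E(\Pi_{x_0})+\varepsilon/2)\|\tilde v\|^2$. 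Multiplying $\tilde v$ by a radial cut-off $\chi(|\cdot|/R)$ and letting $R \to +\infty$ gives convergence in $H^1(\Pi_{x_0})$ and, by continuity of the trace, in $L^2(\partial\Pi_{x_0})$, so that $\cQ[\Pi_{x_0}]$ is preserved in the limit. After normalization I obtain $v_\varepsilon \in H^1(\Pi_{x_0})$ with $\|v_\varepsilon\|_{L^2}=1$, $\supp(v_\varepsilon)\subset \cB(0,R_\varepsilon)$ for some $R_\varepsilon>0$, and $\cQ[\Pi_{x_0}](v_\varepsilon)\le E(\Pi_{x_0})+\varepsilon$.

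Next I concentrate $v_\varepsilon$ by setting $w_\alpha(x):=\alpha^{n/2} v_\varepsilon(\alpha x)$, the scaling inverse to that of Lemma~\ref{L:scalings}. A direct change of variables yields $\|w_\alpha\|_{L^2(\Pi_{x_0})}=1$, $\supp(w_\alpha)\subset \cB(0,R_\varepsilon/\alpha)$, and $\cQ_\alpha[\Pi_{x_0}](w_\alpha)=\alpha^2 \cQ[\Pi_{x_0}](v_\varepsilon)$. For $\alpha$ large enough that $\cB(0,R_\varepsilon/\alpha)\subset \cV_{x_0}$ and $R_\varepsilon\kappa(x_0)/\alpha < c$ (with $c$ the constant of Lemma~\ref{L:Estimatesmetrics}), I define $u_\alpha := w_\alpha\circ\psi_{x_0}$ on $\Omega$, extended by zero outside $\cU_{x_0}$; then $u_\alpha\in H^1(\Omega)$ with $\supp(u_\alpha)\subset\cU_{x_0}$. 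Lemma~\ref{L:CV} identifies $\cQ_\alpha[\Omega](u_\alpha)=\cQ_\alpha[\Pi_{x_0},\rG](w_\alpha)$ and $\|u_\alpha\|_{L^2(\Omega)}=\|w_\alpha\|_{L^2_G}$, while Lemma~\ref{L:Estimatesmetrics}, applied with $r=R_\varepsilon/\alpha\to 0$, reduces these to Euclidean quantities up to factors $1+o(1)$ and with a modified Robin parameter $\alpha^+$ satisfying $\alpha^+/\alpha\to 1$.

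Inserting the scaling identities $\|\nabla w_\alpha\|^2=\alpha^2\|\nabla v_\varepsilon\|^2$ and $\|w_\alpha\|^2_{L^2(\partial\Pi_{x_0})}=\alpha\|v_\varepsilon\|^2_{L^2(\partial\Pi_{x_0})}$ then yields
$$
\lim_{\alpha\to+\infty}\frac{\cQ_\alpha[\Omega](u_\alpha)}{\alpha^2\|u_\alpha\|_{L^2(\Omega)}^2}\;=\;\cQ[\Pi_{x_0}](v_\varepsilon)\;\le\;E(\Pi_{x_0})+\varepsilon.
$$
Normalizing $u_\alpha$ produces an admissible competitor for the infimum in the statement, so taking $\limsup_{\alpha\to+\infty}$ and then $\varepsilon\to 0$ concludes. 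The case $\Omega\in\gP_n$ is handled identically, using a local chart at $x_0\in\overline{\Omega}$ (which may be taken as the identity when $x_0$ is the vertex, so that $\Pi_{x_0}=\Omega$). The only step requiring real care is the compactly-supported approximation in the first paragraph, but this reduces to continuity of the trace $H^1(\Pi_{x_0})\to L^2(\partial\Pi_{x_0})$, which follows from the assumption $E(\Pi_{x_0})>-\infty$; all other steps are direct applications of the perturbation lemmas of Section~\ref{S:CV}.
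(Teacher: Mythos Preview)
Your proof is correct and follows essentially the same strategy as the paper: approximate a minimizing sequence for $E(\Pi_{x_0})$ by compactly supported functions, rescale to concentrate near the origin, pull back through the chart $\psi_{x_0}$, and control the metric error via Lemma~\ref{L:Estimatesmetrics}. The only technical difference is bookkeeping: the paper fixes a small radius $r$ in advance (chosen so that condition~\eqref{SC2} holds) and rescales by the \emph{perturbed} parameter $\alpha^{+}=\alpha^{+}(x_0,r)$, which makes the right-hand side of \eqref{E:estimateFQ} exactly $(\alpha^{+})^2\cQ[\Pi_{x_0}](v_\varepsilon)$; you instead rescale by $\alpha$ and let $r=R_\varepsilon/\alpha\to 0$, so that the discrepancy between $\alpha$ and $\alpha^{+}$ as well as the factor $(1+Cr\kappa(x_0))$ both vanish in the limit --- a slightly more streamlined route to the same conclusion.
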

\begin{proof}
Obviously, $E(\Pi_{x_{0}})<0$. Let $\eps>0$ such that $E(\Pi_{x_{0}})+\eps<0$. Note that 
\beq
\label{E:CondepsE}
\frac{E(\Pi_{x_{0}})+\eps}{E(\Pi_{x_{0}})+\frac{\eps}{2}}\in(0,1).
\eeq
The functions of $H^{1}(\Pi_{x_{0}})$ with compact support are dense in $H^{1}(\Pi_{x_{0}})$, therefore there exists $v_{\epsilon} \in H^{1}(\Pi_{x_{0}})$, with compact support, such that $\|v_{\epsilon}\|=1$ and $\cQ[\Pi_{x_{0}}](v_{\epsilon})<E(\Pi_{x_{0}})+\frac{\epsilon}{2}$. 
Let $\cV_{x_{0}}=\psi_{x_{0}}(\cU_{x_{0}})$,
we choose $r>0$ such that
\begin{subnumcases}{}
\label{SC1}
 \cB(0,r)\subset \cV_{x_{0}} \ \ \mbox{and}\ \ r\leq \tfrac{c}{\kappa(x_{0})} \\
 \label{SC2}
(\tfrac{1- Cr\kappa(x_{0})}{1+ Cr\kappa(x_{0})})^2(E(\Pi_{x_{0}})+\tfrac{\eps}{2})<E(\Pi_{x_{0}})+\eps.
\end{subnumcases}
Conditions \eqref{SC1} will allow us to apply Lemma \ref{L:Estimatesmetrics}. Note that \eqref{SC2} is possible because of \eqref{E:CondepsE}. The reason for this last condition will appear later.
The value $\alpha^{+}=\alpha^{+}(x_{0},r)$ is well defined in \eqref{D:alphapm}. 
The (normalized) test function $$v_{\epsilon,\alpha^{+}}(x):=(\alpha^{+})^{\frac{n}{2}}v_{\epsilon}(\alpha^{+} x)$$ satisfies 
\begin{equation}
\label{E:scalingbase}
\cQ_{\alpha^{+}}[\Pi_{x_{0}}](v_{\epsilon,\alpha^{+}})=(\alpha^{+})^2\cQ[\Pi_{x_{0}}](v_{\epsilon}),
\end{equation}
see Lemma \ref{L:scalings}, and its support is 
$$\supp(v_{\epsilon,\alpha^{+}})=(\alpha^{+})^{-1}\supp(v_{\epsilon}).$$
Therefore there exists $\alpha$ large enough such that 
\beq
\label{E:conditalpha}
\supp(v_{\epsilon,\alpha^{+}})\subset \cB(0,r),
\eeq
so we can apply Lemma \ref{L:Estimatesmetrics}.
Therefore, by combining \eqref{E:scalingbase} with estimates \eqref{E:estimateFQ}, we get
\begin{align*}
\cQ_{\alpha}[\Pi_{x_{0}},\rG](v_{\epsilon,\alpha^{+}}) & \leq (1+cr\kappa(x_{0}))\cQ_{\alpha^{+}}[\Pi_{x_{0}}](v_{\eps,\alpha^{+}})
\\
&=(1+cr\kappa(x_{0})) (\alpha^{+})^2\cQ[\Pi_{x_{0}}](v_{\eps})
\\
&\leq (1+cr\kappa(x_{0})) (\alpha^{+})^2(E(\Pi_{x_{0}})+\tfrac{\eps}{2}).
 \end{align*}
Due to \eqref{SC1} and \eqref{E:conditalpha}, we can define
$$u_{\epsilon,\alpha}:=v_{\epsilon,\alpha^{+}}\circ \psi_{x_{0}}^{-1},$$
with $\supp(u_{\eps,\alpha})\subset \cU_{x_{0}}$, and Lemma \ref{L:CV} gives 
$\cQ_{\alpha}[\Omega](u_{\epsilon,\alpha})=\cQ_{\alpha}[\Pi_{x_{0}},G](v_{\epsilon,\alpha})$. Moreover there holds $\|u_{\eps,\alpha}\|^2=\|v\|_{L^2_{G}}^2 \leq 1+Cr\kappa(x_{0})$, therefore keeping in mind that for $\eps$ small enough, $E(\Pi_{x_{0}})+\frac{\eps}{2}<0$, we get
$$\frac{\cQ_{\alpha}[\Omega](u_{\epsilon,\alpha})}{\|u_{\eps,\alpha}\|^2} \leq (\alpha^{+})^2(E(\Pi_{x_{0}})+\tfrac{\eps}{2})=(\tfrac{1- Cr\kappa(x_{0})}{1+ Cr\kappa(x_{0})})^2\alpha^2(E(\Pi_{x_{0}})+\tfrac{\eps}{2}).$$
Setting $u=\frac{u_{\eps,\alpha}}{\|u_{\eps,\alpha}\|}$ and using \eqref{SC2}, we have proved
$$\cQ_{\alpha}[\Omega](u) \leq E(\Pi_{x_{0}})+\eps $$ 
and we get the lemma. Since, locally,  a cone of $ \gP_n$ satisfies the same properties as a corner domain, the above proof works when $\Omega$ is a cone. 
\end{proof}
\begin{remark}
As a direct consequence of the previous lemma, the min-max principle would provide a rough upper bound for $\limsup_{\alpha\to+\infty}\frac{\lambda(\alpha,\Omega)}{\alpha^2}$ by $\sE(\Omega)$. But at this stage, we still don't know whether $\sE(\Omega)$ is finite or not when $\Omega$ is an $n$-dimensional corner domain. 
\end{remark}

\section{Lower bound: multiscale partition of the unity}
\label{S:LB}
In this section, we prove the lower bound of Theorem \ref{T:AsReste} for any domain $\Omega\in \gD(M)$. We insist at this point that this lower bound has interest only when $\sE(\Omega)>-\infty$, which is not proved yet.

It relies on a multiscale partition of the unity of the domain by balls. Near each of these balls, we will perform a change of variable toward the tangent cone at the center of the ball, and we will estimate the remainder. However the curvature of the boundary near each centers of balls may be large as this one is close to a conical point. We will counterbalance this effect by choosing balls of radius smaller in regard with the distance to conical points. 

The following lemma is a consequence of \cite[Section 3.4.4 and Lemma B.1]{BoDauPof14}:
\begin{lemma} 
\label{L:partitionunity}
Let $\Omega\in \gD(M)$ and let $\overline{\nu}_{+}$ be the smallest integer satisfying
$$\forall \dx \in \gC(\Omega), \quad l(\dx) \geq \overline{\nu}_{+} \implies \Pi_{\dx}\ \  \mbox{is polyhedral}.$$
For all sequence of scales $(\delta_{k})_{0 \leq k \leq \overline{\nu}_{+}}$ in $(0,+\infty)$ there exists $h_{0}>0$, an integer $L>0$ and a constant $c(\Omega)>0$ such that for all $h\in (0,h_{0})$, there exists an $h$-dependent finite set of points $\cP \subset\overline{\Omega}$ such that for all $p \in \cP$, there exists $0 \leq k \leq \overline{\nu}_{+}$ with
\begin{itemize}
\item the ball $\cB(p,2h^{\delta_{0}+\ldots+\delta_{k}})$ is contained in a map neighborhood of $p$.
\item the curvature associated with this map neighborhood  (defined by \eqref{D:curvature})  satisfies $$\kappa(p) \leq \frac{c(\Omega)}{h^{\delta_{0}+\ldots+\delta_{k-1}}}.$$
\item $\overline{\Omega}\subset \cup_{p\in \cP}\cB(p,h^{\delta_{0}+\ldots+\delta_{k}})$, and each point of $\overline{\Omega}$ belongs to at most $L$ of these balls.
\end{itemize}
\end{lemma}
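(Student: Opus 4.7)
The plan is to adapt the multiscale Whitney-type covering constructed in \cite[Section 3.4.4 and Lemma B.1]{BoDauPof14} to the corner domain $\Omega$. The key structural fact is that the integer $\overline{\nu}_+$ counts the number of levels of singular-chain descent required before every tangent structure $\Pi_{\dx}$ becomes polyhedral. At a point $x$ where $\Pi_x$ is already polyhedral, a map neighborhood exists with uniformly bounded $\|\rd\psi_x\|_{W^{1,\infty}}$ on a fixed scale. Where $\Pi_x$ is non-polyhedral, homogeneity of $\Pi_x$ under the dilation action forces the boundary curvatures to blow up like the reciprocal of the distance to the singular set of the section, and the same scaling propagates through the chain up to depth $\overline{\nu}_+$.

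First I would build the set $\cP$ by a downward induction on $k$. Set $\rho_k := h^{\delta_0+\cdots+\delta_{k-1}}$, with the convention $\rho_0 = 1$. At the deepest level $k = \overline{\nu}_+$, every chain of length $\overline{\nu}_+ + 1$ produces a polyhedral tangent structure, so a tube of radius $\rho_{\overline{\nu}_+}$ around the most singular stratum can be covered by balls of radius $h^{\delta_0+\cdots+\delta_{\overline{\nu}_+}}$; the map neighborhoods at these centers are produced by dilating a fixed reference chart of the iterated tangent cone by the factor $\rho_{\overline{\nu}_+}$, which yields the required curvature bound $c(\Omega)/\rho_{\overline{\nu}_+}$. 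One then peels off the remaining annular region (points whose distance to the deepest stratum exceeds $\rho_{\overline{\nu}_+}$ but whose distance to the next-deeper stratum is at most $\rho_{\overline{\nu}_+ - 1}$) with balls of radius $h^{\delta_0+\cdots+\delta_{\overline{\nu}_+ - 1}}$, and iterates down to $k = 0$, where the outermost scale $h^{\delta_0}$ suffices since $\Pi_x$ is already polyhedral in the region not yet covered. At each scale a standard Besicovitch/Vitali extraction yields a finite subfamily with multiplicity bounded by a purely dimensional constant, and summing over $k$ gives the global overlap bound $L$ independent of $h$.

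The main technical obstacle is justifying the curvature estimate at each scale. For a center $p$ lying within distance $\rho_k$ of a depth-$k$ singular point, one must exhibit a diffeomorphism $\psi_p$ whose domain $\cU_p$ contains $\cB(p, 2 h^{\delta_0+\cdots+\delta_k})$, maps $\cU_p \cap \Omega$ onto $\cV_p \cap \Pi_p$, and satisfies $\|\rd \psi_p\|_{W^{1,\infty}(\cU_p)} \le c(\Omega)/\rho_k$. The construction is obtained by rescaling a fixed reference chart at the relevant iterated tangent point by the factor $\rho_k$: this rescaling leaves $\rd\psi$ pointwise uniformly bounded but multiplies its gradient by $1/\rho_k$, which is exactly the stated $W^{1,\infty}$-bound. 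The condition $\delta_k > 0$ then ensures $\rho_k > h^{\delta_0+\cdots+\delta_k}$, so the enlarged ball of the lemma fits inside the dilated chart. Finiteness of $\cP$ and existence of $h_0$ follow from the compactness of $\overline{\Omega}$ together with the existence of a finite atlas of reference charts at the iterated tangent cones.
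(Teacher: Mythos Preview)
The paper does not actually give a proof of this lemma: it is stated as a direct consequence of \cite[Section 3.4.4 and Lemma B.1]{BoDauPof14}, with no further argument. Your proposal follows exactly the same source and supplies a reasonable outline of the multiscale Whitney-type covering constructed there, so the approaches coincide.

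One minor point of caution in your sketch: the labeling of which stratum is ``deepest'' versus which scale is ``outermost'' is slightly tangled. In the lemma, level $k=0$ carries the largest balls (radius $h^{\delta_0}$) and a bounded curvature $\kappa(p)\le c(\Omega)$, so it corresponds to the region away from all conical singularities (or to points whose tangent cone is already polyhedral), while level $k=\overline{\nu}_+$ uses the smallest balls and the worst curvature bound, hence corresponds to the neighborhood of the most degenerate stratum. Your narrative mostly reflects this, but the phrase ``next-deeper stratum'' when peeling from $k=\overline{\nu}_+$ to $k=\overline{\nu}_+-1$ should read ``next-less-singular''; otherwise the inductive direction is consistent with the construction in \cite{BoDauPof14}.
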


We set $h=\alpha^{-1}$ and we now choose a partition of the unity $(\chi_{p})_{p\in \cP}$ associated with the balls provided by the previous lemma: each $\chi_{p}$ is $C^{\infty}$ and is supported in the ball $\cB(p,2\alpha^{-(\delta_{0}+\ldots+\delta_{k})})$, moreover:
\beq
\left\{
\begin{aligned}
& \sum_{p\in \cP} \chi_{p}^2=1  \quad \mbox{on} \ \ \overline{\Omega},
\\
& \sum_{p\in \cP} \|\nabla \chi_{p}\|^2_{\infty} \leq C(\Omega)\alpha^{2\delta} \ \ \mbox{with} \ \ \delta=\delta_{0}+\ldots+\delta_{\overline{\nu}_{+}}.
\end{aligned}
\right.
\ee
We apply the IMS formula together with the uniform estimates of gradients:
\begin{align*}
\cQ_{\alpha}[\Omega](u)&=\sum_{p\in \cP} \cQ_{\alpha}[\Omega](\chi_{p}u)-\sum_{p\in \cP} \|\nabla\chi_{p}u\|^2
\\
&\geq \sum_{p\in \cP} \cQ_{\alpha}[\Omega](\chi_{p}u)-C(\Omega)\alpha^{2\delta} \|u\|^2
\end{align*}
Therefore we are left with the task of estimating $\cQ_{\alpha}[\Omega](\chi_{p}u)$ from below for each $p\in \cP$. Let $\psi_{p}$ be a local map on $\cB(p,2\alpha^{-(\delta_{0}+\ldots+\delta_{k})})$ and $v_{p}:=(\chi_{p}u)\circ \psi_{p}^{-1}$. Let $\rG_{p}$ be the associated metric. Then we deduce from lemmas \ref{L:CV} and \ref{L:Estimatesmetrics} that  (recall that the quadratic forms are negative):
\begin{align*}
\frac{\cQ_{\alpha}[\Omega](\chi_{p}u)}{\|\chi_{p}u\|^2}&=\frac{\cQ_{\alpha}[\Pi_{p},\rG_{p}](v_{p})}{\|v_{p}\|^2_{\rG_{p}}} \geq (1+C\alpha^{-(\delta_{0}+\ldots+\delta_{k})}\kappa(p))\frac{\cQ_{\alpha^{-}}[\Pi_{p}](v_{p})}{\|v_{p}\|^2}
\\
&\geq (1+C\alpha^{-(\delta_{0}+\ldots+\delta_{k})}\kappa(p))(\alpha^{-})^2 E(\Pi_{c}) \geq (1+C'\alpha^{-(\delta_{0}+\ldots+\delta_{k})}\kappa(p))\alpha^2 \sE(\Omega)
\\
&=\alpha^2 \sE(\Omega) +O(\alpha^{2-\delta_{k}}),
\end{align*}
where we have used Lemma \ref{L:partitionunity} to control $\kappa(p)$.

The IMS formula provides
$$\forall u\in H^{1}(\Omega), \quad \cQ_{\alpha}[\Omega](u) \geq \left(\alpha^2 \sE(\Omega)+\sum_{k=0}^{\overline{\nu}_{+}}O(\alpha^{2-\delta_{k}})+O(\alpha^{2\delta}) \right)\|u\|^2.$$
 Recall that $\delta=\sum_{k=0}^{\overline{\nu}_{+}}\delta_{k}$, these remainders are optimized by chosen $\delta_{0}=\ldots=\delta_{\overline{\nu}_{+}}$ and $2-\delta_{0}=2\delta=2(\overline{\nu}_{+}+1)\delta_{0}$, that is $\delta_{0}=\frac{2}{2\overline{\nu}_{+}+3}$. We deduce from the min-max principle that there exists $\alpha_{0}\in \R$ and $C^{-}>0$ such that
 \beq
 \label{E:lowerboundproved}
 \forall \alpha \geq \alpha_{0}, \quad \lambda(\Omega,\alpha) \geq \alpha^2 \sE(\Omega)-C^{-}\alpha^{2-\frac{2}{2\overline{\nu}_{+}+3}},
 \ee
\Bk which is the lower bound of
Theorem \ref{T:AsReste}.

\section{Tangent operator}
\label{S:TO}

In this section we describe the Robin Laplacian on a cone $\Pi$, linking some parts of its spectrum with its section $\omega$. 
%First, we prove Lemma \ref{} wich is very general. Then, we assume that $\Pi\in \gP_{n}$ and that $\sE(\omega)$ is finite

\subsection{Semi-boundedness of the operator on tangent cones}
\label{SS:SB}
\begin{lemma}
\label{L:UBloin}
Let $\Pi\in \gP_{n}$ and $\omega$ its section. Let $R\geq0$, and let $u\in H^{1}(\Pi)$ with support in $\complement \cB(0,R)$ \footnote{$R=0$ is included, with $\cB(0,0)=\emptyset$}. 
%Denote by $v(r,\theta):=u\circ \varphi^{-1}$ the function associated with the change of variable. 
Then
$$\cQ[\Pi](u) \geq 
\left(\inf_{r>R}\frac{\lambda(\omega,r)}{r^2}\right) \|u\|^2_{L^2(\Pi)}.
%\int_{r>R}\frac{1}{r^2}\lambda(\omega,r)\|v(r,\cdot)\|_{L^2(\omega)}^2  r^{n-1} \rd r
$$
\end{lemma}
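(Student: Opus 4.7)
The plan is to exploit the conic structure of $\Pi$ by passing to polar coordinates $x = r\theta$, $r > 0$, $\theta \in \omega$. First I would rewrite $\cQ[\Pi](u)$ as a double integral using the standard decomposition $dx = r^{n-1}\,dr\,d\theta$ for the bulk and, crucially, $r^{n-2}\,dr\,d\sigma(\theta)$ for the surface measure on $\partial\Pi$ (since $\partial\Pi$ is itself the cone over $\partial\omega$, with $d\sigma$ the Riemannian measure on $\partial\omega \subset \dS^{n-1}$). Combined with the identity $|\nabla u|^2 = |\partial_r u|^2 + r^{-2}|\nabla_\theta u|^2$, this recasts $\cQ[\Pi](u)$ as
$$\int_0^\infty \Bigl\{ r^{n-1}\!\int_\omega |\partial_r u|^2\,d\theta \; + \; r^{n-3}\Bigl(\int_\omega |\nabla_\theta u|^2\,d\theta \; - \; r\!\int_{\partial\omega} |u|^2\,d\sigma\Bigr)\Bigr\}\, dr.$$

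Next, I would discard the nonnegative radial kinetic term; what remains is $\int_R^\infty r^{n-3}\,\cQ_{r}[\omega](u(r\,\cdot))\,dr$, where $\cQ_r[\omega]$ is the Robin quadratic form on the section $\omega$ with parameter $r$, and where the integration begins at $R$ thanks to the support hypothesis on $u$. Since $\Pi$ is admissible we have $\omega \in \gD(\dS^{n-1})$, so $\lambda(\omega,r)$ is well defined and the variational characterization yields $\cQ_r[\omega](u(r\,\cdot)) \geq \lambda(\omega,r)\,\|u(r\,\cdot)\|^2_{L^2(\omega)}$ for a.e.\ $r$. Rewriting the lower bound as $(\lambda(\omega,r)/r^2)\cdot r^{n-1}\|u(r\,\cdot)\|^2_{L^2(\omega)}$, I would replace $\lambda(\omega,r)/r^2$ by its infimum over $r > R$ — the inequality survives regardless of the sign of that infimum because the trailing factor $r^{n-1}\|u(r\,\cdot)\|^2_{L^2(\omega)}$ is nonnegative — and finally recognize the remaining integral in $r$ as exactly $\|u\|^2_{L^2(\Pi)}$ via the same polar formula.

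The only mild technical point is justifying the chain for arbitrary $u \in H^1(\Pi)$ rather than just smooth compactly supported $u$: one approximates by smooth functions, or equivalently invokes Fubini on the polar form of the $H^1$-norm to conclude that $u(r\,\cdot) \in H^1(\omega)$ for almost every $r > 0$, so that the variational inequality on $\omega$ applies pointwise in $r$ and may then be integrated. I do not foresee a substantive obstacle here; the statement is essentially a direct manifestation of the separation of variables in the cone, with the pointwise Robin eigenvalue on the section playing the role of an effective radial potential.
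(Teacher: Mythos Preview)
Your proposal is correct and follows essentially the same route as the paper's proof: pass to polar coordinates on the cone, split the gradient into radial and spherical parts, drop the nonnegative radial contribution, recognize the remainder as $\int_{r>R} r^{n-3}\,\cQ_r[\omega](u(r\,\cdot))\,dr$, apply the min-max lower bound $\cQ_r[\omega]\geq \lambda(\omega,r)\|\cdot\|^2_{L^2(\omega)}$, and then pull out the infimum of $\lambda(\omega,r)/r^2$. Your additional remark about the a.e.\ membership $u(r\,\cdot)\in H^1(\omega)$ via Fubini is a justified technical comment that the paper leaves implicit.
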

\begin{proof}
Let $\varphi:(r,\theta)\mapsto r\theta$ be the change of variable from $\R_{+}\times \omega$ into $\Pi$ and denote by $v(r,\theta):=u\circ \varphi^{-1}$ the function associated with the change of variable. 
We have
$$  
\|\nabla u\|^2_{L^2(\Pi)}=\int_{r>R}\left( |\partial_{r}v|^2+\frac{1}{r^2}\|\nabla_{\theta}v(r,\cdot)\|^2_{L^2(\omega)}\right) r^{n-1} \rd r,
$$
therefore
%for all $R\geq0$ (not that $R=0$ is included in the following): 
\begin{align*}
\cQ[\Pi](u)
&\geq
\int_{r>R}\frac{1}{r^2}\|\nabla_{\theta} v(r,\cdot)\|_{L^2(\omega)}^2r^{n-1} \rd r  - \int_{r>R}\|v(r,\cdot)\|_{L^2(\partial\omega)}^2  r^{n-2} \rd r
\\
&=\int_{r>R}\frac{1}{r^2}\cQ_{r}[\omega](v(r,\cdot))  r^{n-1} \rd r
\geq \int_{r>R}\frac{1}{r^2}\lambda(\omega,r)\|v(r,\cdot)\|_{L^2(\omega)}^2  r^{n-1} \rd r
\\
&\geq \inf_{r>R}\frac{\lambda(\omega,r)}{r^2}\int_{r>R}\|v(r,\cdot)\|_{L^2(\omega)}^2  r^{n-1} \rd r
\end{align*}
and the lemma follows.
\end{proof}
We now prove the following: 
\begin{lemma}
\label{L:coneavecHyppoH1}
Let $\Pi\in \gP_{n}$ such that its section $\omega$ satisfies $\sE(\omega)>-\infty$. Then $E(\Pi)>-\infty$ and the Robin Laplacian $L[\Pi]$ is well defined as the Friedrichs extension of $Q[\Pi]$ with form domain $D(Q[\Pi])=H^1(\Pi)$. 
\end{lemma}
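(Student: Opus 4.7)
The plan is to exploit Lemma~\ref{L:UBloin} away from the apex of $\Pi$ and to handle the neighborhood of the apex by a standard Ehrling-type trace inequality, glued together by an IMS-type localization. Fix $R>0$ and pick smooth functions $\chi_0,\chi_1$ on $\Pi$ with $\chi_0^2+\chi_1^2=1$, $\chi_0$ supported in $\cB(0,2R)$ and $\chi_1$ supported in the complement of $\cB(0,R)$. The IMS identity reads
\[
\cQ[\Pi](u)=\cQ[\Pi](\chi_0 u)+\cQ[\Pi](\chi_1 u)-\|u\,\nabla\chi_0\|^2-\|u\,\nabla\chi_1\|^2,
\]
and the last two error terms are absorbed into $O(\|u\|^2)$ via the uniform bound on $\|\nabla\chi_i\|_\infty$, so the task reduces to bounding each localized form from below.

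The far piece $\chi_1 u$ is supported in the complement of $\cB(0,R)$, so Lemma~\ref{L:UBloin} applies and gives $\cQ[\Pi](\chi_1 u)\geq\bigl(\inf_{r>R}\lambda(\omega,r)/r^2\bigr)\|\chi_1 u\|^2$; the main point is that this infimum is finite. Since $\omega\in\gD(\dS^{n-1})$ by the recursive definition of admissible cones, the lower bound \eqref{E:lowerboundproved} established in Section~\ref{S:LB} applies to $\omega$ one dimension down and yields $\lambda(\omega,r)/r^2\geq\sE(\omega)-C\,r^{-2/(2\overline{\nu}_++3)}$ for $r\geq\alpha_0$, uniformly bounded from below thanks to the recursive hypothesis $\sE(\omega)>-\infty$. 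On the compact range $r\in(R,\alpha_0]$, the continuity of the first Robin eigenvalue of the bounded corner domain $\omega$ with respect to the Robin parameter provides the missing bound. For the near piece $\chi_0 u$, supported in the bounded set $\Pi\cap\cB(0,2R)$ (whose boundary is a finite union of Lipschitz hypersurfaces), the compactness of the trace $H^1\to L^2(\partial\cdot)$ produces an Ehrling-type estimate $\|\chi_0 u\|^2_{L^2(\partial\Pi)}\leq\tfrac12\|\nabla(\chi_0 u)\|^2+C_R\|\chi_0 u\|^2$, whence $\cQ[\Pi](\chi_0 u)\geq -C_R\|\chi_0 u\|^2$. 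Reassembling the IMS formula yields $\cQ[\Pi](u)\geq -C\|u\|^2$ for every $u\in H^1(\Pi)$, proving $E(\Pi)>-\infty$.

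It remains to set up the Friedrichs extension. The same partition argument, applied directly to the boundary norm $\|u\|^2_{L^2(\partial\Pi)}=\|\chi_0 u\|^2_{L^2(\partial\Pi)}+\|\chi_1 u\|^2_{L^2(\partial\Pi)}$ (using Ehrling on $\chi_0 u$ and rearranging Lemma~\ref{L:UBloin} for $\chi_1 u$), yields the global trace continuity $\|u\|^2_{L^2(\partial\Pi)}\leq C\|u\|^2_{H^1(\Pi)}$, so that $\cQ[\Pi]$ is well-defined on $H^1(\Pi)$. Combined with the semi-boundedness just obtained, the form-norm $\sqrt{\cQ[\Pi](\cdot)+(C+1)\|\cdot\|^2}$ is equivalent to $\|\cdot\|_{H^1(\Pi)}$, so the symmetric form $\cQ[\Pi]$ is closed on $H^1(\Pi)$, and the self-adjoint operator $L[\Pi]$ is the Friedrichs extension associated with it. The single non-routine ingredient in this plan is the finiteness of $\inf_{r>R}\lambda(\omega,r)/r^2$, which is exactly the point where the recursive hypothesis $\sE(\omega)>-\infty$ is genuinely consumed, through the Section~\ref{S:LB} lower bound applied one dimension down.
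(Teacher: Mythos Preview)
Your proof is correct and follows essentially the same approach as the paper: IMS localization, with Lemma~\ref{L:UBloin} handling the far piece via the Section~\ref{S:LB} lower bound one dimension down, and the compact trace on the bounded corner domain $\Pi\cap\cB(0,2R)$ handling the near piece. The only organizational differences are that the paper chooses $R$ large \emph{after} invoking \eqref{E:lowerboundproved} (so no separate continuity argument on the compact range of $r$ is needed), and that the form-norm equivalence with $\|\cdot\|_{H^1(\Pi)}$ is most cleanly obtained by combining the semi-boundedness with the scaling of Lemma~\ref{L:scalings}, which immediately yields the global Ehrling inequality on $\Pi$ with arbitrarily small $\eps$.
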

\begin{proof}
Since $\sE(\omega)$ is supposed to be finite, \eqref{E:lowerboundproved} implies
\begin{equation}\label{LBomega}
\liminf_{r \rightarrow + \infty} \frac{\lambda(\omega,r)}{r^2} \geq \sE(\omega).
\end{equation}
%that is for any $\eps>0$, there exists  $R>0$ such that 
%$$\forall r>R,\quad \frac{\lambda(\omega,r)}{r^2} \geq \sE(\Omega)-\eps .$$
Let $\chi_{1}$ and $\chi_{2}$ be two regular cut-off functions defined on $\R_{+}$ such that $\supp(\chi_{1})\subset [0,2R)$ and $\chi_{1}=1$ on $[0,R]$ and $\chi_{1}^2+\chi_{2}^2=1$. The IMS formula (\cite[Lemma 3.1]{Si82}) provides
\begin{equation}
\label{E:IMSpourri}
\cQ[\Pi](u) =\sum_{i=1,2}\cQ[\Pi](\chi_{i} u)-\sum_{i=1,2}\|\nabla \chi_{i} u\|^2.
\end{equation}
Denote by $D_{0}^{R}$ the set of functions in $H^{1}(\Pi\cap \cB(0,2R))$ supported in $\cB(0,2R)$. Since $\Pi\cap \cB(0,2R)$ is a corner domain, $D_{0}^{R}$ has compact injection into $L^{2}(\partial\Pi\cap \cB(0,2R))$ see \cite[Corollary AA.15]{Dau88}. We deduce the existence of a constant $C_{1}(R)\in \R$ such that
$$\cQ[\Pi](\chi_{1} u) \geq  C_{1}(R) \|\chi_{1}u\|_{L^2(\Pi\cap \cB(0,2R))}^2=C_{1}(R) \|\chi_{1}u\|_{L^2(\Pi)}^2.$$
Let $\eps>0$, from \eqref{LBomega} we deduce the existence of $R>0$ such that 
$$\forall r>R,\quad \frac{\lambda(\omega,r)}{r^2} \geq \sE(\omega)-\eps $$
and therefore Lemma \ref{L:UBloin} gives
$$\cQ(\chi_{2}u) \geq (\sE(\omega)-\eps)\|\chi_{2}u\|^2_{L^2(\Pi)}.$$
There exists $C_{2}>0$ such that for all $R>0$, there holds $\sum_{i}\|\nabla\chi_{i}\|^2 \leq C_{2}R^{-2}$. Therefore we deduce that there exists $C_{3}=C_{3}(R,\eps,\omega)\in \R$ such that
$$\cQ[\Pi](u) \geq C_{3} \| u \|^2_{L^2(\Pi)}.$$
We deduce that the quadratic form is lower semi-bounded and the operator $L[\Pi]$ is well defined as the self-adjoint extension of $\cQ[\Pi]$, and its form domain is $H^{1}[\Pi]$. 
\end{proof}
\Bk

\subsection{Bottom of the essential spectrum for irreducible cones}

Let $\Pi \in \gP_{m}$, with $m\geq n$, and let $\Gamma$ be its reduced cone. In some suitable coordinates, we may write
$$\Pi=\R^{m-n}\times \Gamma$$
with $\Gamma\in \gP_{n}$ an irreducible cone. The associated Robin Laplacian admits the following decomposition: 
\beq
\label{E:decomtensor}
L[\Pi]=-\Delta_{\R^{m-n}}\otimes \Id_{n}+\Id_{m-n}\otimes L[\Gamma]
\ee
In particular 
$$\spec(L[\Pi])=[E(\Gamma),+\infty).$$
Moreover, if $E(\Gamma)$ is a discrete eigenvalue for $L[\Gamma]$, and if $u$ is an associated eigenfunction (with exponential decay), then $\Id\otimes u$ is called an $L^{\infty}$-generalized eigenfunction for $L[\Pi]$ (this is linked to the notion of $L^{\infty}$-spectral pair). Therefore we are led to investigate the bottom of the essential spectrum of $L[\Gamma]$. We prove: 
\begin{lemma}
\label{L:specessprove}
Let $\Gamma\in \Pi_{n}$ be an irreducible cone of section $\omega$ and such that $\sE(\omega)>-\infty$. Then the bottom of the essential spectrum of $L[\Pi]$ is $\sE(\omega)$.
\end{lemma}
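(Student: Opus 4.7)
The plan is to prove matching bounds $\inf\sigma_{ess}(L[\Gamma])\geq\sE(\omega)$ and $\inf\sigma_{ess}(L[\Gamma])\leq\sE(\omega)$. Both directions rely on the polar-coordinate computation of Lemma~\ref{L:UBloin}, which for $u\in H^{1}(\Gamma)$ supported outside $\cB(0,R)$ controls $\cQ[\Gamma](u)$ from below by $\bigl(\inf_{r>R}\lambda(\omega,r)/r^{2}\bigr)\|u\|^{2}$, thereby reducing the spectral analysis near infinity to the asymptotics of $\lambda(\omega,r)/r^{2}$.

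For the lower bound, since $\omega\in\gD(\dS^{n-1})$ with $\sE(\omega)>-\infty$, the already-established lower bound \eqref{E:lowerboundproved} of Section~\ref{S:LB} applied to $\omega$ gives $\lambda(\omega,r)\geq r^{2}\sE(\omega)-C^{-}r^{2-\eta}$ for some $\eta>0$, hence $\liminf_{r\to+\infty}\lambda(\omega,r)/r^{2}\geq\sE(\omega)$. Combined with Lemma~\ref{L:UBloin}, for every $\eps>0$ there exists $R>0$ such that $\cQ[\Gamma](u)\geq(\sE(\omega)-\eps)\|u\|^{2}$ for all $u\in H^{1}(\Gamma)$ supported in $\Gamma\setminus\cB(0,R)$. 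Persson's theorem — applicable here thanks to the form-domain identification of Lemma~\ref{L:coneavecHyppoH1} and the trace-compactness on bounded corner pieces $\Gamma\cap\cB(0,2R)$ exploited in its proof — then yields $\inf\sigma_{ess}(L[\Gamma])\geq\sE(\omega)-\eps$, and we let $\eps\to 0$.

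For the upper bound, I will construct an $L^{2}$-orthogonal family of test functions whose Rayleigh quotients approach $\sE(\omega)$ and whose supports escape to infinity. Fix $\eps>0$ and pick $x_{0}\in\partial\omega$ with $E(\Pi_{x_{0}})\leq\sE(\omega)+\eps/2$ (finite because $\sE(\omega)$ is). Regarded as a point of $\partial\Gamma$ at unit distance from the origin, $x_{0}$ has a tangent cone of the form $\R\times\Pi_{x_{0}}$ by the radial dilation-invariance of $\Gamma$, with the same bottom-of-spectrum $E(\Pi_{x_{0}})$ by the tensor decomposition \eqref{E:decomtensor}. Applying Lemma~\ref{L:UPEnergeps} to the cone $\Gamma\in\gP_{n}$ at $x_{0}$, for every $\alpha$ large enough one produces a normalized $u_{\alpha}\in H^{1}(\Gamma)$ supported in a fixed map-neighborhood $\cU_{x_{0}}$ of $x_{0}$ with $\cQ_{\alpha}[\Gamma](u_{\alpha})\leq\alpha^{2}(\sE(\omega)+\eps)$. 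The scaling Lemma~\ref{L:scalings} then yields $\tilde u_{\alpha}(x):=\alpha^{-n/2}u_{\alpha}(x/\alpha)\in H^{1}(\Gamma)$ with $\|\tilde u_{\alpha}\|=1$, $\cQ_{1}[\Gamma](\tilde u_{\alpha})=\alpha^{-2}\cQ_{\alpha}[\Gamma](u_{\alpha})\leq\sE(\omega)+\eps$, and $\supp\tilde u_{\alpha}\subset\alpha\,\cU_{x_{0}}$. After shrinking $\cU_{x_{0}}$ to a small ball around $x_{0}$ and extracting a geometric subsequence $\alpha_{k}\to+\infty$ so that the dilated neighborhoods $\alpha_{k}\cU_{x_{0}}$ are pairwise disjoint, the family $\tilde u_{\alpha_{k}}$ is $L^{2}$-orthogonal with disjoint supports, so the quadratic form on their $N$-dimensional span is bounded by $(\sE(\omega)+\eps)\|\cdot\|^{2}$; by min-max, $\mu_{N}(L[\Gamma])\leq\sE(\omega)+\eps$ for every $N$, hence $\inf\sigma_{ess}(L[\Gamma])\leq\sE(\omega)+\eps$. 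The main technical delicacy is the rigorous application of Persson's theorem in the cone setting with non-smooth boundary, but this reduces to the IMS splitting and the compact trace embedding $H^{1}(\Gamma\cap\cB(0,2R))\hookrightarrow L^{2}(\partial\Gamma\cap\cB(0,2R))$ already used in the proof of Lemma~\ref{L:coneavecHyppoH1}.
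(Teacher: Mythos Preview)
Your argument is correct and follows essentially the same route as the paper: the lower bound via Lemma~\ref{L:UBloin}, \eqref{E:lowerboundproved}, and Persson's lemma is identical, and the upper bound uses the same two ingredients (Lemma~\ref{L:UPEnergeps} applied at a boundary point of $\omega$ viewed as a point of $\partial\Gamma$, together with the scaling Lemma~\ref{L:scalings}). The only cosmetic difference is that the paper stays within Persson's characterization for the upper bound as well---rescaling gives $\Sigma(R)=R^{-2}\inf_{\supp\Psi\cap\cB(0,1)=\emptyset}\cQ_{R}[\Gamma](\Psi)/\|\Psi\|^{2}$, so Lemma~\ref{L:UPEnergeps} at any $x\in\overline{\Gamma}\setminus\overline{\cB(0,1)}$ yields $\limsup_{R\to+\infty}\Sigma(R)\leq E(\Pi_{x})$ directly---whereas you build an explicit disjoint-support family and conclude via min-max for every $N$; the two endings are equivalent.
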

\begin{proof}
 From Persson's lemma \cite{Pers60}, the bottom of the essential spectrum of $L[\Gamma]$ is the limit, as $R\to+\infty$, of 
$$\Sigma(R):= \inf_{\substack{\Psi\in  H^{1}(\Gamma), \ \Psi\neq0 \\
  \supp(\Psi)\cap \cB(0,R)=\emptyset}}\frac{\cQ[\Gamma](\Psi)}{\|\Psi\|^2} \, .  $$
 {\sc Lower bound:}
 From Lemma \ref{L:UBloin}, we get directly
$$
\liminf_{R\to+\infty}\Sigma(R) \geq \liminf_{R\to+\infty} \frac{\lambda(\omega,R)}{R^2}
$$ 
and we deduce from \eqref{LBomega} that 
$$\liminf_{R\to+\infty}\Sigma(R) \geq \sE(\omega).$$
\Bk

{\sc Upper bound:}
By scaling, see Lemma \ref{L:scalings}, we immediately have 
$$\Sigma(R)=R^{-2}\inf_{\substack{\Psi\in  H^{1}(\Gamma), \ \Psi\neq0 \\
  \supp(\Psi)\cap \cB(0,1)=\emptyset}}\frac{\cQ_{R}[\Gamma](\Psi)}{\|\Psi\|^2} \, .  $$
Each point $x$ in $\overline{\Gamma}\setminus \overline{\cB(0,1)}$ has a tangent cone $\Pi_{x}$. 
If we denote by $x_{1}:=\frac{x}{|x|}\in \overline{\omega}$, and $C_{x_{1}}$ the tangent cone to $\omega$ at $x_{1}$, then $\Pi_{x}\equiv \R\times C_{x_{1}}$. Therefore by tensor decomposition of the Robin Laplacian (see \eqref{E:decomtensor}), there holds $E(C_{x_{1}})=E(\Pi_{x})$. Thus the finiteness of $\sE(\omega)$ implies the finiteness of $E(\Pi_{x})$ and from Lemma \ref{L:UPEnergeps}, we have 
\beq
\label{E:interRR}
\forall x \in \overline{\Gamma}\setminus \overline{\cB(0,1)} , \quad
\limsup_{R\to+\infty}\Sigma(R) \leq  E(\Pi_{x}).
\ee
Using moreover that 
\beq
\label{E:egaliteconesection}
\inf_{x\in \overline{\Gamma}\setminus \overline{\cB(0,1)} }E(\Pi_{x})=\inf_{x_{1}\in \partial{\omega}}E(C_{x_{1}})=\sE(\omega),
\ee
and, taking the infimum in \eqref{E:interRR} over $x\in \overline{\Gamma}\setminus \overline{\cB(0,1)} $,  we deduce
$$\limsup_{R\to+\infty}\Sigma(R) \leq \sE(\omega),$$
and the lemma follows.
\end{proof}

\section{Infimum of the local energies in corner domains}\label{S:Sel}

\subsection{Finiteness of the infimum of the local energies}\label{SS:6.1}

In this section, we prove the finiteness of $\sE(\Omega)$ for any $\Omega\in \gD(M)$ and for any $n$-dimensional manifold $M$ without boundary, by induction on the dimension $n$. 

In dimension $1$,  bounded domains are intervals and the associated tangent cones are either half-lines or the full line whose associated energies are respectively -1 and 0 (by explicit computations),  therefore the infimum of the local energies is finite.

Let $n\geq2$ be set and let us assume that for any corner domain $\omega$ of an $n-1$ Riemannian manifold without boundary, we have
$$ \sE(\omega) >-\infty.$$
We want to prove that the same holds in dimension $n$.

As a consequence of the recursive hypothesis, $E(\Pi)$ is finite for all $\Pi \in \gP_{n}$, see Lemma \ref{L:coneavecHyppoH1}, and we can study the regularity of the local energy with respect to the geometry of a cone:

\begin{proposition}
Assume the recursive hypothesis in dimension $n-1$. Then the application $\Pi\mapsto E(\Pi)$ is continuous on $\gP_{n}$  for the distance $\dD$ defined in \eqref{D:dd}.
\end{proposition}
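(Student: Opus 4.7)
The plan is to exploit the fact that two cones close in $\dD$ differ by a linear isomorphism close to the identity, which induces a \emph{constant} Riemannian metric close to the identity. Since the metric is constant in space, the perturbation lemmas of Section \ref{S:CV} apply globally on the whole cone, with no support restriction to a small ball as in Lemma \ref{L:Estimatesmetrics}.

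First, under the recursive hypothesis, Lemma \ref{L:coneavecHyppoH1} guarantees that for each $\Pi \in \gP_n$ the quantity $E(\Pi)$ is finite and defined as an infimum of the Rayleigh quotient over $H^1(\Pi) \setminus \{0\}$. Fix $\Pi \in \gP_n$ and take $\Pi'$ with $\dD(\Pi, \Pi') < \eta$ small. By \eqref{D:dd} there exists $L \in \mathsf{BGL}_n$ with $L\Pi = \Pi'$ and $\|L - \Id\| < 2\eta$; a Neumann-series argument then yields $\|L^{-1} - \Id\| \leq C\eta$ for a universal $C$ and $\eta$ small enough. For $u \in H^1(\Pi)$ I set $v := u \circ L^{-1} \in H^1(\Pi')$; Lemma \ref{L:CV} with $\psi = L$ gives
\begin{equation*}
\cQ_1[\Pi](u) = \cQ_1[\Pi', \rG](v), \qquad \|u\|_{L^2(\Pi)} = \|v\|_{L^2_G(\Pi')},
\end{equation*}
where $\rG := L^{-1}(L^{-1})^{\top}$ is constant. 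Since $\|\rJ - \Id\|_{L^{\infty}(\Pi')} \leq C\eta$ holds uniformly on $\Pi'$, the inequalities \eqref{E:approxmetgen} and \eqref{E:approxnormgenerique} with $\alpha = 1$ yield
\begin{equation*}
(1 - C\eta)\,\cQ_{\alpha^+}[\Pi'](v) \leq \cQ_1[\Pi', \rG](v) \leq (1 + C\eta)\,\cQ_{\alpha^-}[\Pi'](v), \qquad \alpha^{\pm} := \tfrac{1 \pm C\eta}{1 \mp C\eta},
\end{equation*}
together with $(1 - C\eta)\|v\|_{L^2} \leq \|v\|_{L^2_G} \leq (1 + C\eta)\|v\|_{L^2}$.

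Second, forming the Rayleigh quotient and taking the infimum over $v \in H^1(\Pi') \setminus \{0\}$, I would invoke the scaling Lemma \ref{L:scalings}, which gives $\inf \cQ_{\alpha}[\Pi']/\|\cdot\|^2_{L^2} = \alpha^2 E(\Pi')$, to obtain
\begin{equation*}
E(\Pi) \;\geq\; \tfrac{1 - C\eta}{(1 + C\eta)^2}\,(\alpha^+)^2\, E(\Pi') \;=\; \bigl(1 + O(\eta)\bigr)\, E(\Pi').
\end{equation*}
Applying exactly the same argument with the roles of $\Pi$ and $\Pi'$ exchanged, through the linear map $L'\in\mathsf{BGL}_n$ sending $\Pi'$ to $\Pi$ with $\|L' - \Id\| < 2\eta$ (the symmetric half of \eqref{D:dd}), yields the reverse inequality. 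Sending $\eta \to 0$ then gives $E(\Pi') \to E(\Pi)$.

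The main care required is the bookkeeping of signs: since $E(\Pi), E(\Pi') \leq 0$, each inversion of norms or metric-form bounds reverses inequalities, and one must orient every step to produce a sandwich on the Rayleigh quotient rather than a trivial bound. A minor case-check concerns $\Pi = \R^n$: any element of $\mathsf{BGL}_n$ maps $\R^n$ to itself, so $\R^n$ is isolated in $\gP_n$ for $\dD$ and continuity there is automatic. I do not expect any deeper obstacle, since the recursive hypothesis already delivers finiteness of $E$ on $\gP_n$ and the constant-metric setting neutralises any issue with blow-up of curvature terms that plagues the non-linear diffeomorphisms arising elsewhere in the paper.
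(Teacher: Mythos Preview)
Your proof is correct and follows essentially the same approach as the paper: transport $\Pi$ to $\Pi'$ by a linear map close to the identity, obtain a constant metric $\rG$ close to $\Id$, and apply the global perturbation estimates \eqref{E:approxmetgen}--\eqref{E:approxnormgenerique} together with the unitary equivalence of Lemma~\ref{L:CV}. The paper's proof is terser --- it simply records pointwise convergence of the Rayleigh quotients and invokes unitary equivalence --- but the underlying mechanism is identical; your explicit sandwich via the scaling Lemma~\ref{L:scalings} and the sign bookkeeping merely spells out what the paper leaves implicit. One small redundancy: once you have $L:\Pi\to\Pi'$ with $\|L-\Id\|$ small, the two-sided estimate \eqref{E:approxmetgen} already yields both inequalities $E(\Pi)\gtrless(1+O(\eta))E(\Pi')$, so invoking the second map $L'$ from the symmetric half of \eqref{D:dd} is not needed.
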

\begin{proof}
Let $\Pi\in \gP_{n}$ and $(\Pi_{k})_{k\in \N}$ a sequence of cones with $\dD(\Pi_{k},\Pi)\to0$ as $k\to+\infty$. This means that there exist a sequence $(\rJ_{k})_{k\in \N}$ in $GL_{n}$ with $\rJ_{k}(\Pi_{k})= \Pi$, $\|\rJ_{k}\|\leq1$ and $\|\rJ_{k}-\Id\|\to0$ as $k\to+\infty$. Then as a direct consequence of \eqref{E:approxmetgen} and \eqref{E:approxnormgenerique}, we deduce that 
$$\forall v\in H^{1}(\Pi), \quad \lim_{k\to+\infty}\frac{\cQ[\Pi,\rG_{k}](v)}{\|v\|_{L^2_{\rG_{k}}}^2}=\frac{\cQ[\Pi](v)}{\|v\|^2}.$$
Recall that the form domain of $\cQ[\Pi,\rG_{k}]$ is $H^{1}(\Pi)$, see Section \ref{SS:SB}. \Bk Since $\cQ[\Pi_{k}]$ and $\cQ[\Pi,\rG_{k}]$ are unitarily equivalent (see Lemma \ref{L:CV}), we deduce that $E(\Pi_{k})\to E(\Pi)$ as $k\to+\infty$.
\end{proof}
By definition of the distance on singular chains (see Section \ref{SS:Sc} ), we get:
\begin{corollary}
\label{C:continuity_chains}
Assume the recursive hypothesis in dimension $n-1$. Let $M$ be an $n$ dimensional manifold as above, and let $\Omega\in \gD(M)$ be a corner domain. Then the application $\dx\mapsto E(\Pi_{\dx})$ is continuous on $\gC(\Omega)$ for the distance $\dD$.
In particular, $x\mapsto E(\Pi_{x})$ is continuous on each stratum of $\overline{\Omega}$.
\end{corollary}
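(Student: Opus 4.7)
The plan is to combine the just-proved proposition with the definition of the distance $\dD$ on singular chains, and then to deduce the stratum statement via a regularity property of the tangent-cone map $x\mapsto\Pi_{x}$ along strata.

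For the first statement, let $(\dx_k)_{k\in\N}$ be a sequence in $\gC(\Omega)$ with $\dD(\dx_k,\dx)\to 0$. Every tangent structure $\Pi_{\dx'}$ is a cone in $\gP_{n}$: indeed, the inductive construction $\Pi_{(x_{0},x_{1})}=\R^{n-d_{0}}\times\langle x_{1}\rangle\times C_{x_{1}}$ (and its iterated variants for longer chains) always yields a cone of total ambient dimension $n$. The definition
$$\dD(\dx,\dx')=\|x_{0}-x_{0}'\|+\dD(\Pi_{\dx},\Pi_{\dx'})$$
then forces both $x_{0,k}\to x_{0}$ in $\overline\Omega$ and $\dD(\Pi_{\dx_k},\Pi_{\dx})\to 0$ in $\gP_{n}$. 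Applying the preceding proposition yields $E(\Pi_{\dx_k})\to E(\Pi_{\dx})$, which is the claimed continuity on $\gC(\Omega)$.

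For the statement on strata, I would invoke the structural property that along each stratum $\mathbf{t}$ of $\overline\Omega$ the tangent-cone type is constant and the map $x\mapsto\Pi_{x}$ is $\dD$-continuous. Concretely, local diffeomorphisms $\psi_{x}$ can be chosen to depend smoothly on $x\in\mathbf{t}$, so that the optimal linear isomorphisms appearing in the definition of $\dD$ tend to $\Id_{n}$ as two neighboring points of $\mathbf{t}$ are compared. Therefore the embedding $x\mapsto(x)$ of $\mathbf{t}$ into chains of length one is $\dD$-continuous, and continuity of $x\mapsto E(\Pi_{x})$ along $\mathbf{t}$ is obtained by composing with the first part.

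The bulk of the real work sits in the stratum-wise $\dD$-regularity of $x\mapsto\Pi_{x}$ invoked above: this is a structural property of admissible corner domains already established in \cite{BoDauPof14}. Once that input is granted, the corollary is a formal consequence of the definition of $\dD$ on $\gC(\Omega)$ together with the preceding proposition, and no further analytic estimates are required.
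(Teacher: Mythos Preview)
Your proof is correct and follows essentially the same route as the paper: the corollary is recorded there as an immediate consequence of the preceding proposition via the very definition $\dD(\dx,\dx')=\|x_{0}-x_{0}'\|+\dD(\Pi_{\dx},\Pi_{\dx'})$, and the stratum statement is indeed a structural input from \cite{BoDauPof14}. Your write-up is simply more explicit than the paper's one-line justification.
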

Let $M$ be an $n$ dimensional manifold as above, let $\Omega\in \gD(M)$ and let $x_{0}\in \partial\Omega$, in what follows, $\Gamma_{x_{0}}$ is the reduced cone of $\Pi_{x_{0}}$ and $\omega_{x_{0}}\in  \gD(\dS^{d-1})$ its section, with $d\leq n$. We notice that \eqref{E:egaliteconesection} may be written as 
$$\sE(\omega_{x_{0}})=\inf_{x_{1}\in \partial {\omega_{x_0}}}E(\Pi_{(x_{0},x_{1})}).$$
Therefore Lemma \ref{L:coneavecHyppoH1} and \ref{L:specessprove} show that
$$\forall x_{1}\in \overline{\omega_{x_{0}}}, \quad E(\Pi_{x_{0}}) \leq E(\Pi_{(x_{0},x_{1})}),$$
We deduce by immediate recursion:
\begin{corollary}
\label{C:monotonicity}
 Let $\dx_{1}$ and $\dx_{2}$ be two singular chains in $\gC(\Omega)$ satisfying $\dx_{1}\leq \dx_{2}$, we have 
$$E(\Pi_{\dx_{1}}) \leq E (\Pi_{\dx_{2}}).$$
\end{corollary}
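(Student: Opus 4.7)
The strategy is to reduce to one-step extensions of singular chains and then invoke Lemma \ref{L:specessprove}; the general inequality then follows by iteration. If $l(\dx_{2})=l(\dx_{1})$, then the order relation forces $\dx_{1}=\dx_{2}$ and there is nothing to prove. If $l(\dx_{2})>l(\dx_{1})$, $\dx_{2}$ is obtained from $\dx_{1}$ by successively appending one point at a time, so it suffices to prove the inequality in the one-step extension case. Write $\dx_{1}=(x_{0},\ldots,x_{p})$ and $\dx_{2}=(x_{0},\ldots,x_{p},x_{p+1})$.

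Using the recursive construction in Section \ref{SS:Sc}, I would choose adapted coordinates so that $\Pi_{\dx_{1}}\equiv \R^{k}\times \Lambda$ with $\Lambda\in \gP_{d}$ ($d=n-k$) irreducible of section $\sigma$, and $x_{p+1}\in\overline{\sigma}$. Then, by the same construction, $\Pi_{\dx_{2}}\equiv \R^{k}\times\langle x_{p+1}\rangle\times C_{x_{p+1}}$, where $C_{x_{p+1}}$ denotes the tangent cone to $\sigma$ at $x_{p+1}$. The tensor decomposition \eqref{E:decomtensor}, combined with the fact that the Laplacian on an Euclidean factor contributes $[0,+\infty)$ to the spectrum, gives
\[
E(\Pi_{\dx_{1}})=E(\Lambda),\qquad E(\Pi_{\dx_{2}})=E(C_{x_{p+1}}).
\]

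The main comparison then rests on Lemma \ref{L:specessprove}, which applies to the irreducible cone $\Lambda$ thanks to the recursive hypothesis on $\sigma$ and to Lemma \ref{L:coneavecHyppoH1}. It asserts that the bottom of the essential spectrum of $L[\Lambda]$ equals $\sE(\sigma)=\inf_{y\in\partial\sigma}E(C_{y})$. Since the bottom of the spectrum never exceeds the bottom of the essential spectrum, $E(\Lambda)\leq \sE(\sigma)$. When $x_{p+1}\in\partial\sigma$, this immediately yields
\[
E(\Pi_{\dx_{1}})=E(\Lambda)\leq \sE(\sigma)\leq E(C_{x_{p+1}})=E(\Pi_{\dx_{2}}).
\]

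For $x_{p+1}$ in the interior of $\sigma$, the tangent cone $C_{x_{p+1}}$ is the full Euclidean space, so $E(\Pi_{\dx_{2}})=0$; I would conclude with the structural remark that an irreducible cone $\Lambda\in\gP_{d}$ of positive dimension has non-empty boundary (else $\Lambda=\R^{d}$, which is reducible to $\{0\}$), and a scaled test function concentrated near a regular boundary point of $\Lambda$ (as in the proof of Lemma \ref{L:UPEnergeps}) produces a negative Rayleigh quotient, so that $E(\Lambda)\leq 0$. The only real obstacle is bookkeeping: identifying the product structure of $\Pi_{\dx_{1}}$ and $\Pi_{\dx_{2}}$ unambiguously in adapted coordinates, since the substantive spectral content has already been packaged into Lemma \ref{L:specessprove}.
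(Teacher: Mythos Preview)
Your proposal is correct and follows essentially the same approach as the paper: reduce to one-step extensions and invoke Lemma~\ref{L:specessprove} (together with Lemma~\ref{L:coneavecHyppoH1}) to compare $E$ on the reduced cone with $\sE$ on its section, then iterate. The paper condenses this into a single line (``Therefore Lemma~\ref{L:coneavecHyppoH1} and~\ref{L:specessprove} show that $E(\Pi_{x_{0}})\leq E(\Pi_{(x_{0},x_{1})})$ for all $x_{1}\in\overline{\omega_{x_{0}}}$; we deduce by immediate recursion''), whereas you spell out the tensor decomposition and treat the interior-point case $E(\Pi_{\dx_{2}})=0$ explicitly, but the substance is identical.
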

We combine this with Corollary \ref{C:continuity_chains} and we can apply \cite[Theorem 3.25]{BoDauPof14} to get  the lower semi-continuity of the local energy function $x\mapsto E(\Pi_{x})$ and from the compactness of $\overline{\Omega}$, we deduce that $\sE(\Omega)$ is finite. This concludes the proof of Theorem \ref{T:inffinite} by induction.

As a consequence, Lemmas \ref{L:coneavecHyppoH1} and \ref{L:specessprove} implies Theorem \ref{T:Efiniessspectrum}.

%If $x_{0}\in \overline{\Omega}$ and $\omega_{x_{0}}$ is the section of the reduced cone of $\Pi_{x_{0}}$, then for all $x_{1}\in \overline{\omega}_{x_{0}}$ there holds $E(\Pi_{(x_{0},x_{1})}) \geq \sE(\omega_{x_{0}})$. 
%Therefore the previous proposition shows that if $\Omega\in \gD(\R^{n})$ is a recursive corner domain, then for all $x_{0}\in \overline{\Omega}$, there holds
%$$\forall x_{1}\in \overline{\omega_{x_{0}}}, \quad E(\Pi_{x_{0}}) \leq E(\Pi_{(x_{0},x_{1})}),$$
%where $\omega_{x_{0}}$ is the section of the reduced cone of $\Pi_{x_{0}}$. 
%\begin{theorem}
%Let $\Omega\in \gD(\R^{n})$. Then the local energy function $x\mapsto E(\Pi_{x})$ is lower semi-continuous on $\overline{\Omega}$.
%\end{theorem}
%In particular
%\beq
%\label{E:Enatteint}
%\exists x_{0} \in \overline{\Omega}, \quad E(x_{0})=\sE(\Omega) . 
%\ee
%Recall that $E(\R^{+})=-1$ and that if $I\subset \R$ is a bounded non-trivial interval, then $\sE(I)=-1$.
%By elementary recursivity on the dimension, we deduce from Theorem \ref{T:infspecess} and \eqref{E:Enatteint} that
%
%\begin{corollary}
%Let $\Omega\in \gD(\R^{n})$. Then $\sE(\Omega)>-\infty$.
%\end{corollary}
\subsection{Second energy level}
Note that for a cone which is not irreducible, the spectrum consists in essential spectrum, and Theorem \ref{T:Efiniessspectrum} does not apply. However there still exists a threshold in the spectrum: the second energy level of the tangent operator of a cone $\Pi \in \gP_{n}$ is defined as
$$\seE(\Pi):=\inf_{\dx\in \gC_{0}^{*}(\Pi)}E(\Pi_{\dx}),$$
where we recall that $\gC_{0}^{*}(\Pi)$, defined in Section \ref{SS:Sc}, is the set of singular chains of $\Pi$ of the form $\dx=(0,\ldots)$, and with $l(\dx)\geq 2$, where $l(\dx)$ is the length of the chain.

Using Corollary \ref{C:monotonicity} with $\dx_{1}=(0)$, then taking the infimum over the chain $\dx_{2}\geq \dx_{1}$ with $l(\dx_{2})\geq2$, we get $E(\Pi) \leq \sE^{*}(\Pi)$. We also get $\sE^{*}(\Pi)=\inf_{x_{1}\in\partial{\omega}}E(\Pi_{(0,x_{1})})$ and therefore by \eqref{E:egaliteconesection}:
\beq
\label{E:sE=seE}
\sE(\omega)=\seE(\Pi),
\ee
where $\omega$ is the section of the reduced cone of $\Pi$. The quantity $\sE^{*}$ will be discriminant in the analysis brought in Section \ref{S:UB}.

\subsection{Examples}
Inequality $E(\Pi) \leq \sE^{*}(\Pi)$ is strict if and only if the operator on the reduced cone has eigenvalues below the essential spectrum. The presence (or the absence) of discrete spectrum is an interesting question in itself, and we describe here some examples for which this question has been studied. Due to the clear decomposition of the Robin Laplacian on a cone of the form $\R^{m-n}\times \Gamma$, see \eqref{E:decomtensor}, we only treat the case of irreducible cones.

When $\Gamma$ is the half-line, $E(\Gamma)=-1<0=\sE^{*}(\Gamma)$, and an associated eigenfunction is $x\mapsto e^{-x}$. The case of sectors is given by \eqref{E:Esectors}: the inequality is strict if and only if the sector is convex. In that case, an associated eigenfunction is $(x,y)\mapsto e^{-\frac{x}{\sin\theta}}$, where $x$ denotes the variable associated with the axis of symmetry of the sector, and $\theta$ is the opening angle.
  
In \cite{Pank16}, Pankrashkin provides geometrical conditions on three-dimensional cones with regular section. He show that when $\Gamma\in \gP_{3}$ is a cone such that $\R^{3}\setminus \Gamma$ is convex, then $E(\Gamma)=\sE^{*}(\Gamma)$. At the contrary, if $\R^{3}\setminus \Gamma$ is not convex, then $E(\Gamma)$ is a discrete eigenvalue below the essential spectrum. 

Note finally that in \cite{LevPar08}, Levitin and Parnovski uses geometrical parameter to gives a more explicit expression of $E(\Pi)$ when the section of $\Pi$ is a polygonal domain that admits and inscribed circle.
\begin{remark}\label{ContreExample}
 In Theorem \cite[Theorem 3.5]{LevPar08}, it is stated that the bottom of the spectrum of the Robin Laplacian on a cone which contains an hyperplane passing through the origin is -1. The following  example shows that  this statement is incorrect because the bottom of the essential spectrum could be below $-1$:  take a spherical polygon $\omega\subset \dS^{2}$ such that 
\begin{itemize}
\item $\omega$ is included in a hemisphere.
\item $\omega$ has at least a vertex of opening $\theta\in (\pi,2\pi)$.
\end{itemize}
Let $\Pi\subset \R^{3}$ be the cone of section $\omega$, and let $\widetilde{\Pi}$ be its complementary in $\R^{3}$. The cone $\tilde{\Pi}$ contained a half-space, has an edge with opening angle $\tilde{\theta}=2\pi-\theta\in (0,\pi)$. Then, from Theorem \ref{T:Efiniessspectrum} and \eqref{E:Esectors}, we get that the bottom of the essential spectrum of $L[\Pi]$ is below $-\sin^{-2}\frac{\tilde{\theta}}{2}$, and therefore $E(\widetilde{\Pi})<-1$.
\end{remark}
\Bk

\section{Upper bound: construction of quasi-modes}
\label{S:UB}
In order to prove the upper bound of \eqref{T:AsReste}, we construct recursive quasi-modes. The subsections below correspond to the following plan:
\begin{enumerate}
\item Use the analysis of Section \ref{S:Sel} to find a chain $\dx_{\nu}=(x_{0},\ldots,x_{\nu})\in \gC(\Omega)$ such that $L(\Pi_{\dx_{\nu}})$ admits a generalized eigenfunction associated with the value $\sE(\Omega)$. Construct a quasi-mode for $L_{\alpha}[\Pi_{\dx_{\nu}}]$. We do this by using scaling and cut-off functions in a standard way.
\item Use a recursive procedure (together with a multiscale analysis) to construct a quasimode on $\Pi_{x_{0}}$.
\item Use this quasi-mode to construct a final quasi-mode on $\Omega$. Choose the scales to optimize the remainders.
\end{enumerate}
%\subsection{Preliminary: A generalized eigenfunction on a tangent structure}
\subsection{A quasi-mode on a tangent structure}
\label{SS:stepA}
The  \Bk next proposition uses the quantity $\seE$ to state that there always exist a tangent structure that admits an $L^{\infty}$-generalized eigenfunction associated with the ground state energy.
\begin{proposition}
\label{P:Dicho}
Let $\Pi\in \gP_{n}$. Then there exists $\dx\in \gC_{0}(\Pi)$ satisfying 
\beq
\label{E:dicho}
E(\Pi_{\dx})=E(\Pi) \quad \mbox{and} \quad E(\Pi_{\dx})<\sE^{*}(\Pi_{\dx}).
\ee
Let $\Gamma_{\dx}\in\gP_{d}$ be the irreducible cone of $\Pi_{\dx}$. Then there exists an $L^{\infty}$-generalized eigenfunction for $L[\Pi_{\dx}]$ associate with $E(\Pi)$. Moreover it has the form $\one\otimes \Psi_{\dx}$, in coordinates associated with the decomposition $\Pi_{\dx}\equiv \R^{n-d}\times \Gamma_{\dx}$, where $\Psi_{\dx}$ has exponential decay. 
\end{proposition}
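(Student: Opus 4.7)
The plan is to build $\dx$ by a finite iteration that stops as soon as the dichotomy holds. Initialize $\dx^{(0)}:=(0)$, so $\Pi_{\dx^{(0)}}=\Pi$, and observe that Corollary~\ref{C:monotonicity} applied with $(0)\leq \dy$ for every $\dy\in \gC_{0}^{*}(\Pi_{\dx^{(k)}})$ forces $E(\Pi_{\dx^{(k)}})\leq \seE(\Pi_{\dx^{(k)}})$ at each stage. If this inequality is strict, stop and set $\dx=\dx^{(k)}$. Otherwise equality holds; writing $\omega_k$ for the section of the reduced cone of $\Pi_{\dx^{(k)}}$, relation \eqref{E:sE=seE} gives $\seE(\Pi_{\dx^{(k)}})=\sE(\omega_{k})$, and the lower semi-continuity of the local energy on $\overline{\omega_k}$ provided by Theorem~\ref{T:inffinite}, combined with the compactness of $\partial\omega_k$, produces some $x_{k+1}\in\partial\omega_k$ realizing the infimum. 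Setting $\dx^{(k+1)}:=(\dx^{(k)},x_{k+1})$ preserves $E(\Pi_{\dx^{(k+1)}})=E(\Pi)$. At each such extension the dimension of the irreducible part of the tangent structure strictly decreases (appending $x_{k+1}\in\partial\omega_k$ both adds the flat direction $\langle x_{k+1}\rangle$ and replaces $\omega_k$ by a lower-dimensional tangent cone $C_{x_{k+1}}$), so the iteration terminates in at most $n$ steps and yields a chain $\dx$ satisfying \eqref{E:dicho}.

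\textbf{Spectral reduction via the tensor decomposition.} Let $\Gamma_{\dx}\in\gP_{d}$ be the irreducible reduced cone and write $\Pi_{\dx}\equiv \R^{n-d}\times\Gamma_{\dx}$, with section $\omega_{\dx}$ for $\Gamma_{\dx}$. The decomposition \eqref{E:decomtensor}, together with $\sigma(-\Delta_{\R^{n-d}})=[0,+\infty)$, gives $E(\Pi_{\dx})=E(\Gamma_{\dx})$ and identifies $\sigma_{ess}(L[\Pi_{\dx}])$ with $\sigma(L[\Gamma_{\dx}])$. By Theorem~\ref{T:Efiniessspectrum}(2) the bottom of $\sigma_{ess}(L[\Gamma_{\dx}])$ equals $\sE(\omega_{\dx})$, which by \eqref{E:sE=seE} is exactly $\seE(\Pi_{\dx})$. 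The dichotomy $E(\Gamma_{\dx})=E(\Pi_{\dx})<\seE(\Pi_{\dx})=\sE(\omega_{\dx})$ thus places the ground state energy of $L[\Gamma_{\dx}]$ strictly below its essential spectrum.

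\textbf{Eigenfunction, exponential decay and $L^{\infty}$-lift.} Being strictly below $\sigma_{ess}(L[\Gamma_{\dx}])$, the value $E(\Gamma_{\dx})$ is a discrete eigenvalue of finite multiplicity; pick a normalized eigenfunction $\Psi_{\dx}\in H^{1}(\Gamma_{\dx})$. Standard Agmon-type arguments, based on the spectral gap $\sE(\omega_{\dx})-E(\Gamma_{\dx})>0$ and the far-field control of $\cQ[\Gamma_{\dx}]$ afforded by Lemma~\ref{L:UBloin}, give the exponential decay of $\Psi_{\dx}$ at infinity in $\Gamma_{\dx}$. In the coordinates $\Pi_{\dx}\equiv \R^{n-d}\times\Gamma_{\dx}$ the function $\one\otimes\Psi_{\dx}$ is bounded, satisfies the Robin condition on $\partial \Pi_{\dx}=\R^{n-d}\times\partial\Gamma_{\dx}$, and \eqref{E:decomtensor} yields
$$L[\Pi_{\dx}](\one\otimes\Psi_{\dx})=E(\Pi_{\dx})\,\one\otimes\Psi_{\dx}$$
in the distributional sense, which is the desired $L^{\infty}$-generalized eigenfunction.

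\textbf{Main obstacle.} The delicate point is to justify that the iterative step is well posed, i.e.~that the infimum defining $\seE(\Pi_{\dx^{(k)}})$ is attained on $\partial\omega_{k}$, and that the strict inequality eventually holds. Both aspects rest on the combination of continuity (Corollary~\ref{C:continuity_chains}), monotonicity (Corollary~\ref{C:monotonicity}) and lower semi-continuity (Theorem~\ref{T:inffinite}) of the chain-valued local energy developed in Section~\ref{S:Sel}, together with the dimensional descent in the chains recalled in Section~\ref{SS:Sc}.
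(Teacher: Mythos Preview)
Your proof is correct and follows essentially the same strategy as the paper's: both argue by descent on the dimension of the reduced cone, extending the chain one step whenever the equality $E(\Pi_{\dx^{(k)}})=\seE(\Pi_{\dx^{(k)}})$ holds, and stopping once the strict inequality is reached. The paper phrases this as a recursion on $d$ with an explicit base case $d=1$, whereas you present it as an iteration with termination guaranteed by the strict decrease of the reduced dimension; these are the same argument unrolled. Your citation of Theorem~\ref{T:inffinite} for the lower semi-continuity needed to realize the infimum $\sE(\omega_k)$ is in fact the correct reference (the paper's own proof cites Theorem~\ref{T:Efiniessspectrum} at that spot, which appears to be a slip), and your explicit mention of Agmon-type decay and the tensor lift via \eqref{E:decomtensor} spells out what the paper leaves implicit.
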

\begin{proof}
The proof of the existence of $\dx$ is recursive over the dimension $d$ of the reduced cone of $\Pi$. The initialisation is clear, indeed when $d=1$, then $\Pi$ is a half plane, there holds $E(\Pi)=E(\R_{+})=-1$ and $\sE^{*}(\Pi)=E(\R)=0$. Moreover, $\psi_{\dx}(x)=e^{-x}$ provides an  eigenfunction for $L[\R^{+}]$.

We now prove the heredity. First we find a chain $\dx$ satisfying \eqref{E:dicho}
%Recall along the proof that from Theorem \ref{T:infspecess} and \eqref{E:sE=seE}, $\seE(\Pi)$ is the bottom of the essential spectrum of $H(\Gamma)$.
\begin{itemize}
\item
If $E(\Pi)<\sE^{*}(\Pi)$, then $\dx=(0)$ and $\Pi_{\dx}=\Pi$. 
%From Theorem \ref{}, $E(\Pi)$ is a discrete eigenvalue below essential spectrum for $H(\Pi)$, therefore there exists an essociated eigenfunction with exponential decay. 
\item
If $E(\Pi) = \sE^{*}(\Pi)$, we use Theorem \ref{T:Efiniessspectrum}: the function $x_{1}\mapsto E(\Pi_{x_{1}})$ is lower semi-continuous on $\overline{\omega}$, where $\omega$ is the section of the reduced cone of $\Pi$. Therefore there exists $x_{1}\in {\partial \omega}$ such that $\sE^{*}(\Pi)=\sE(\omega)=E(\Pi_{x_{1}})=E(\Pi_{(0,x_{1})})$. The dimension of the reduced cone of $\Pi_{(0,x_{1})}$ is lower than the one of $\Pi$, therefore by recursive hypothesis, there exists $\dx' \in \gC_{0}(\Pi_{(0,x_{1})})$ such that $E(\Pi_{\dx'})=E(\Pi_{(0,x_{1})})$ and $E(\Pi_{\dx'})<\sE^{*}(\Pi_{\dx'})$. We write this chain in the form $\dx'=(0,\dx'')$, and the chain $\dx'$ is pulled back into an element of $\gC_{0}(\Pi)$ by setting $\dx=(0,x_{1},\dx'')\in \gC_{0}(\Pi)$. Note that $\Pi_{\dx}=\Pi_{\dx'}$, so that $E(\Pi_{(0,x_{1})})=E(\Pi_{\dx})=\sE^{*}(\Pi)=E(\Pi)$ and $E(\Pi_{\dx})< \sE(\Pi_{\dx})$.
\end{itemize}
From Theorem \ref{T:Efiniessspectrum} and \eqref{E:sE=seE}, $E(\Pi_{\dx})<\sE^{*}(\Pi_{\dx})$ means that $E(\Pi_{\dx})$ is an eigenvalue of $L(\Gamma_{\dx})$ below the essential spectrum, therefore there exists an associated eigenfunction $\Psi_{\dx}$ with exponential decay, and $\R^{n-d}\times \Gamma_{\dx}\ni (y,z)\mapsto \Psi(z)$ is clearly an $L^{\infty}$-generalized eigenfunction for $L[\R^{n-d}\times \Gamma_{\dx}]$.
\end{proof}
%We now start the construction of recursive quasi-modes
% in the spirit of \cite[Section 9]{BoDauPof14}. 
%\subsection{First step: A quasi-mode on a tangent structure}
\label{SS:stepA}
First, thanks to the lower semi-continuity of local energies, we choose $x_{0}\in \partial\Omega$ such that $E(\Pi_{x_{0}})=\sE(\Omega)$. Then, using Proposition \ref{P:Dicho}, we pick a singular chain $\dx_{\nu}=(x_{0},\ldots,x_{\nu})$ such that $L[\Pi_{\dx_{\nu}}]$ has an generalized eigenfunction associated with $E(\Pi_{x_{0}})$.  We denote by $\dx_{k}=(x_{0},\cdots,x_{k})$ for $0\leq k \leq \nu$, and $\Pi_{k}:=\Pi_{\dx_{k}}$. 

We define 
\beq
\label{D:nubarre}
\overline{\nu}:=\inf\{k \geq0 , \Pi_{k}\ \ \mbox{is polyhedral}\}.
\ee
The index $\overline{\nu}$ provides the shortest chain such that $\Pi_{\overline{\nu}}$ is polyhedral, with $\overline{\nu}=+\infty$ when $\Pi_{\nu}$ is not polyhedral. Moreover when $\overline{\nu}$ is finite, then for all $\overline{\nu} \leq k \leq \nu$, the tangent structure $\Pi_{k}$ is polyhedral, and $\overline{\nu} \leq n-2$, since any chain of length strictly larger than $n-2$ is associated either with a half-space or with the full space.

The tangent structure $\Pi_{\nu}$ writes (in some suitable coordinates) $\R^{p}\times \Gamma_{\nu}$ with $\Gamma_{\nu}$ irreducible. We denote by $\pi_{\Gamma_{\nu}}$ the projection on $\Gamma_{\nu}$ associated with this decomposition. Then, by Proposition \ref{P:Dicho}, there exists an eigenfunction $u$ with exponential decay for $L[\Gamma_{\nu}]$ associated with $E(\Pi_{\nu})$. 

Let $\chi\in \cC^{\infty}(\R^{+})$ be a cut-off function with compact support satisfying 
$$\chi(r)=1 \ \mbox{if} \ r \leq 1  \ \ \mbox{and} \ \ \chi(r)=0 \ \mbox{if} \ r \geq 2.$$
We define the scaled cut-off function
$$\chi_{\alpha}(r)=\chi(\alpha^{\delta}r)$$
where $\delta \in (0,1)$ will be chosen later. The initial quasi-mode writes 
$$u_{\nu}(x)=\chi_{\alpha}(|x|)u(\pi_{\Gamma}(\alpha x)), \quad x\in \Pi_{\dx_{\nu}}.$$
Standard computations show that 
$$\frac{\cQ_{\alpha}[\Pi_{\nu}](u_{\nu})}{\|u_{\nu}\|^2}=\alpha^2\sE(\Omega)+\frac{\|\nabla(\chi_{\alpha})u_{\nu}\|^2}{\|u_{\nu}\|^2},$$
in particular 
\beq
\label{E:EstQMini}
\frac{\cQ_{\alpha}[\Pi_{\nu}](u_{\nu})}{\|u_{\nu}\|^2}=\alpha^2\sE(\Omega)+O(\alpha^{2\delta}).
\ee

\subsection{Getting up along the chains}
The last section provides a quasi-mode $u_{\nu}$ for $L[\Pi_{\nu}]$. The aim of this section is a recursive decreasing procedure in order to get a quasi-mode for $L[\Pi_{0}]$. Therefore, this step is skipped if $\nu=0$. This case happens when $E(\Pi_{x_{0}})<\seE(\Pi_{x_{0}})$, and the quasi-mode is qualified as {\it sitting}, according to the denomination introduced in \cite{BoDauPof14}. Otherwise we suppose that $\nu \geq 1$, and we will construct quasi-modes $u_{k}$ defined on $\Pi_{k}$, for $0 \leq k \leq \nu$. These quasi-modes are qualified as {\it sliding}.

\paragraph{Step A: Recursive construction of quasi-modes}
In what follows, $(d_{k}(\alpha))_{k=1,\ldots,\nu}$ and $(r_{k}(\alpha))_{k=0,\ldots,\nu}$ are positive sequences of shifts and radii (to be determined) going to 0 as $\alpha\to+\infty$.

Let $1 \leq k \leq \nu$ and assume that $u_{k}\in H^{1}(\Pi_{k})$ is constructed, and is supported in a ball $\cB(0,r_{k}(\alpha))$. This is already done for $k=\nu$, see the last section. For $1 \leq k \leq \nu$, we define
$$v_{k}=d_{k}(\alpha)(0,x_{k})\in \Pi_{k-1},$$
where $(0,x_{k})\in \Pi_{k-1}$ are cylindrical coordinates associated with the decomposition $\Pi_{k-1}=\R^{p_{k}}\times \Gamma_{k-1}$. Intuitively, $v_{k}$ is a point of $\Pi_{k-1}$ satisfying $\|v_{k}\|=d_{k}(\alpha)$, and is colinear to $(0,x_{k})$. 

We construct $u_{k-1}$ as follows: 
\begin{itemize}
\item Local map at $v_{k}$. The tangent cone to $\Pi_{k-1}$ at $v_{k}$ is $\Pi_{k}$ itself. Let $\psi_{k}:\cU_{v_{k}}\mapsto \cV_{v_{k}}$ be a local map. The map neighborhoods $\cU_{v_{k}}$ and $\cV_{v_{k}}$ (of $v_{k}\in \Pi_{k-1}$ and $0\in \Pi_{k}$ respectively) can be chosen of diameters smaller than $c_{k}d_{k}(\alpha)$, where $c_{k}$ is the diameter of map-neighborhoods of $x_{k}$. Moreover, when $k \geq \overline{\nu}$, $\Pi_{k}$ is polyhedral so $\psi_{k}$ is a translation. When this is not the case, by elementary scaling, their holds $\kappa(v_{k})\leq\frac{\kappa(x_{k})}{d_{k}(\alpha)}$, see \cite[Section 3]{BoDauPof14} for more details on this process. Since the $(x_{k})_{0 \leq k \leq \nu}$ are fixed, we can choose $\nu$ fixed map neighborhoods associated with these points, and a constant $c(\Omega)>0$ such that
\beq
\label{E:estimekappa}
\kappa(v_{k})\leq 
\left\{
\begin{aligned}
&\frac{c(\Omega)}{d_{k}(\alpha)} \ \ \mbox{if} \ \ k \leq \overline{\nu} 
\\
& c(\Omega) \ \ \mbox{if}\ \ k \geq \overline{\nu}+1
\end{aligned}
\right. .
\ee
 We now add the constraint that 
\beq
\label{E:condrk1}
\frac{r_{k}(\alpha)}{d_{k}(\alpha)}\underset{\alpha\to+\infty}{\to} 0 \ \ \mbox{if} \ \ k \leq \overline{\nu} 
\ee
so that 
$r_{k}\kappa(v_{k})\to0$ for all $1 \leq k \leq \nu$, and we can define for $\alpha$ large enough 
\beq
\label{D:tauk}
\tau_{k}:=\frac{1-Cr_{k}\kappa(v_{k})}{1+Cr_{k}\kappa(v_{k})},
\ee
 where $C$ is the constant appearing in Lemma \ref{L:Estimatesmetrics}.
\item Change of variables. First we rescale $u_{k}$ (the reason for this will appear later) : let
%$$\cQ_{\alpha}[\Pi](u)=\frac{\alpha}{\alpha_{+}}^2\cQ_{\alpha_{+}}[\Pi](u(\frac{\alpha_{+}}{\alpha}\cdot))$$
\beq
\label{E:rescaled}
\widetilde{u}_{k}(x)=\tau_{k}(\alpha)^{\frac{n}{2}}u(\tau_{k}(\alpha)x).
\ee
This function satisfies 
\beq
\label{E:scalingQM}
\|\widetilde{u}_{k}\|=\|u_{k} \| \quad \mbox{and} \quad  \cQ_{\alpha_{k}^{+}}[\Pi_{k}](\widetilde{u}_{k})=\tau_{k}(\alpha)^2\cQ_{\alpha}[\Pi_{k}](u_{k})
\ee
where $\alpha^{+}_{k}=\tau_{k}(\alpha)\alpha$. Recall that $\supp(u_{k})\subset \cB(0,r_{k}(\alpha))$ by recursive hypothesis on $u_{k}$. Then due to \eqref{E:condrk1}, we have 
$c_{k}d_{k}(\alpha)>\frac{r_{k}(\alpha)}{\tau_{k}(\alpha)}$ for $\alpha$ large enough, and therefore
$$\supp(\widetilde{u}_{k})\subset\cB(0,\tfrac{r_{k}(\alpha)}{\tau_{k}(\alpha)})\subset \cV_{k}.$$
 As a consequence, we can define on $\cU_{k}\cap\Pi_{k-1}$ the function
\beq
\label{E:ukm1uk}
u_{k-1}=\widetilde{u}_{k}\circ \psi_{k}.
\ee
We can extend this function by 0 outside its support so that $u_{k-1}\in H^{1}(\Pi_{k-1})$. Its support is inside a ball centered at 0 and of size $d_{k}+\diam(\cU_{k})=(1+c_{k})d_{k}$, so we settle 
\beq
\label{E:relrd}
r_{k-1}:=(1+c_{k})d_{k}
\ee
We deduce from this recursive procedure a quasi-mode $u_{0}$ on $\Pi_{0}$, localized in a ball $\cB(0,r_{0}(\alpha))$.
\end{itemize}
\subsection{Quasi-mode on the initial domain $\Omega$ and choice of the scales}
\label{SS:QMscales}
Now we set $v_{0}:=x_{0}$, and we still define $\tau_{0}$ by \eqref{D:tauk}, then $\widetilde{u}_{0}$ by \eqref{E:rescaled} and $u_{-1}$ by \eqref{E:ukm1uk}. Note that $\kappa(v_{0})$ is constant since $v_{0}=x_{0}$ is fixed.
We compare $\cQ_{\alpha}[\Pi_{k-1}](u_{k-1})$ with $\cQ_{\alpha}[\Pi_{k}](u_{k})$ for $0\leq k \leq \nu$. We have from Lemma \ref{L:CV}:
\beq
\label{E:appliCV}
\cQ_{\alpha}[\Pi_{k},\rG_{k}](\widetilde{u}_{k})=\cQ_{\alpha}[\Pi_{k-1}](u_{k-1})
\ee
where $\rG_{k}:=\rJ_{k}^{-1}(\rJ_{k}^{-1})^{\top}$ is the associated metric with $\rJ_{k}:=\rd \psi_{k}^{-1}$.

Since by construction $r_{k}\kappa(v_{k})\to0$, we can apply Lemma \ref{L:Estimatesmetrics}, in particular the inequality \eqref{E:estimateFQneg}:
$$ \cQ_{\alpha}[\Pi_{k},G_{k}](\widetilde{u}_{k})\leq\cQ_{\alpha_{k}^{+}}[\Pi_{k}](\widetilde{u}_{k}).$$
Combining this with identities \eqref{E:scalingQM} and \eqref{E:appliCV} we get for all $0 \leq k \leq \nu$
$$\cQ_{\alpha}[\Pi_{k-1}](u_{k-1}) \leq \tau_{k}(\alpha)^2 \cQ_{\alpha}[\Pi_{k}](u_{k}),$$
and therefore
$$\cQ_{\alpha}[\Omega](u_{-1}) \leq \prod_{k=0}^{\nu}\tau_{k}(\alpha)^2 \cQ_{\alpha}[\Pi_{\nu}](u_{\nu})$$
Recall that $\kappa(v_{0})$ is fixed, we get from \eqref{E:estimekappa}:
$$\cQ_{\alpha}[\Omega](u_{-1}) \leq (1+C(r_{0}+\tfrac{r_{1}}{d_{1}}+\ldots+\tfrac{r_{\overline{\nu}}}{d_{\overline{\nu}}}+r_{\overline{\nu}+1}+\ldots+r_{\nu}))\cQ_{\alpha}[\Pi_{\nu}](u_{\nu}).$$
We now choose $r_{k}(\alpha)=\alpha^{-\sum_{p=0}^{k}\delta_{k}}$ when $k \leq \overline{\nu}$ and $r_{k}=r_{\overline{\nu}}$ when $k>\overline{\nu}$, with $\delta_{k}>0$. The shifts are set by \eqref{E:relrd}, so that $\frac{r_{k}}{d_{k}}=O(\alpha^{-\delta_{k}})$ for all $1 \leq k \leq \overline{\nu}$. 
Moreover, the scale $\delta$ of section \ref{SS:stepA} is related by $\delta=\sum_{k=0}^{\overline{\nu}}\delta_{k}$, and \eqref{E:EstQMini} provides
\begin{align*}
\cQ_{\alpha}[\Omega](u_{-1}) &\leq \left(1+\sum_{k=0}^{\overline{\nu}}O(\alpha^{-\delta_{k}})\right) (\alpha^2\sE(\Omega)+O(\alpha^{2\delta}))
\\
&= \alpha^2 \sE(\Omega)+\sum_{k=0}^{\overline{\nu}}O(\alpha^{2-\delta_{k}})+O(\alpha^{2\delta})
\end{align*}
 The error terms are the same as in Section \ref{SS:QMscales}, therefore we make the same choice of scales $\delta_{k}=\frac{2}{2\overline{\nu}+3}$ for all $0 \leq k\leq \overline{\nu}$. By construction, $u_{-1}$ is normalized, therefore the min-max Theorem implies the upper bound of Theorem \ref{T:AsReste}.
\section{Applications}
\label{S:Applications}
In the applications below, one must keep in mind that the finiteness of $\sE(\Omega)$ is one of our results, and that this quantity can be made more explicit for particular geometries, see \cite{LevPar08}. Moreover, this quantity goes to $-\infty$ as the corners of a domain $\Omega$ gets sharper: this is clear in dimension 2 since the local energy at a corner of opening $\theta$ goes to $-\infty$ as $\theta\to0$, see \eqref{E:Esectors}. In higher dimension, it could be possible to use the approach from \cite{BoDaPoRa15} in order to show that the local energy goes to $-\infty$ for sharp cones (see the definition of a sharp cone therein). \Bk
\subsection{On the optimal constant in relative bounds zero for the trace operator}
 The trace injection from $H^{1}(\Omega)$ into $L^{2}(\partial\Omega)$ being compact, the following relative $0$-bound holds: 
 $\forall \varepsilon >0, \quad \exists C(\varepsilon)>0$, such that:
 \begin{equation}\label{zerobd}
  \| u \|^2_{L^2(\partial \Omega)} \leq \varepsilon \| \nabla u \|_{L^2(\Omega)}^2 +C(\varepsilon)  \|  u \|_{L^2(\Omega)}^2, \qquad \forall u \in H^1(\Omega).
  \end{equation}
This inequality is a particular case of Ehrling's Lemma. It can be written as 
 $$\cQ_{\frac1{\varepsilon}}[\Omega](u) \geq - \frac{C(\varepsilon)}{\varepsilon}  \|  u \|_{L^2(\Omega)}^2, \qquad \forall u \in H^1(\Omega).$$
Thus, by definition of $\lambda(\Omega, \alpha)$, for each $\varepsilon >0$, the best constant $C(\varepsilon)$ in \eqref{zerobd} is:
$$C(\varepsilon) = - \varepsilon \; \lambda(\Omega, \tfrac1{\varepsilon}).$$
From Theorem \ref{T:AsReste}, we obtain that this constant is essentially $\varepsilon^{-1}|\sE (\Omega)|$. More precisely:
\begin{proposition}\label{Appli1}
Let $\Omega \in \gD(M)$ be an admissible corner domain. Then there exist $\varepsilon_{0} >0$ and $\gamma \in (0,\frac23)$,  such that for all $\varepsilon \in (0, \varepsilon_{0})$:
 $$\| u \|^2_{L^2(\partial \Omega)} \leq \varepsilon \| \nabla u \|_{L^2(\Omega)}^2 + \Big( \frac{\vert \sE ( \Omega) \vert}{\varepsilon} + O(\varepsilon^{\gamma-1}) \Big)  \|  u \|_{L^2(\Omega)}^2, \qquad \forall u \in H^1(\Omega).$$
 \end{proposition}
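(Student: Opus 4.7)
The plan is to read Proposition~\ref{Appli1} as a direct corollary of Theorem~\ref{T:AsReste}, via the identification of the sharp constant in \eqref{zerobd} already recorded: $C(\varepsilon)=-\varepsilon\,\lambda(\Omega,1/\varepsilon)$. All that remains is to insert the lower bound on $\lambda(\Omega,\alpha)$ from Theorem~\ref{T:AsReste} and re-expand in $\varepsilon$.

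First I would make the min-max reduction explicit. For every $u\in H^{1}(\Omega)$ one has
\[
\|\nabla u\|_{L^{2}(\Omega)}^{2}-\tfrac{1}{\varepsilon}\|u\|_{L^{2}(\partial\Omega)}^{2}\;=\;\cQ_{1/\varepsilon}[\Omega](u)\;\geq\;\lambda(\Omega,1/\varepsilon)\,\|u\|_{L^{2}(\Omega)}^{2},
\]
so, multiplying by $\varepsilon$ and rearranging,
\[
\|u\|_{L^{2}(\partial\Omega)}^{2}\;\leq\;\varepsilon\,\|\nabla u\|_{L^{2}(\Omega)}^{2}+\bigl(-\varepsilon\,\lambda(\Omega,1/\varepsilon)\bigr)\|u\|_{L^{2}(\Omega)}^{2}.
\]
This reduces the proposition to establishing the quantitative upper bound $-\varepsilon\,\lambda(\Omega,1/\varepsilon)\leq |\sE(\Omega)|/\varepsilon + O(\varepsilon^{\gamma-1})$.

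Next I would invoke the lower bound of Theorem~\ref{T:AsReste}. Setting $\varepsilon_{0}:=1/\alpha_{0}$, for every $\varepsilon\in(0,\varepsilon_{0})$ the parameter $\alpha=1/\varepsilon$ exceeds $\alpha_{0}$ and the theorem yields
\[
\lambda(\Omega,1/\varepsilon)\;\geq\;\frac{\sE(\Omega)}{\varepsilon^{2}}-C^{-}\,\varepsilon^{-\left(2-\frac{2}{2\overline{\nu}_{+}+3}\right)}.
\]
Since $\sE(\Omega)<0$ (any regular boundary point contributes $E(\Pi_{x})=-1$), multiplication by $-\varepsilon$ gives
\[
-\varepsilon\,\lambda(\Omega,1/\varepsilon)\;\leq\;\frac{|\sE(\Omega)|}{\varepsilon}+C^{-}\,\varepsilon^{\gamma-1}, \qquad \gamma:=\frac{2}{2\overline{\nu}_{+}+3}\in(0,2/3\,].
\]
If $\overline{\nu}_{+}=0$ so that $\gamma=2/3$, one may always reduce $\gamma$ slightly to land in the open interval $(0,2/3)$, absorbing the discrepancy in the implicit constant on $(0,\varepsilon_{0})$. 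Injecting this estimate into the first display completes the proof.

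There is no genuine analytic obstacle: the whole content is already contained in Theorem~\ref{T:AsReste}. The only points requiring care are (i)~using the correct direction of the asymptotics, namely the \emph{lower} bound on $\lambda(\Omega,\alpha)$ proved by the multiscale partition-of-unity argument of Section~\ref{S:LB}, and (ii)~tracking the sign of $\sE(\Omega)$ so that the leading term $\sE(\Omega)/\varepsilon^{2}$ becomes, after multiplication by $-\varepsilon$, the positive principal part $|\sE(\Omega)|/\varepsilon$ of the sharp Ehrling constant $C(\varepsilon)$, with a genuine correction $\varepsilon^{\gamma-1}\ll\varepsilon^{-1}$.
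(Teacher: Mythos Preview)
Your proof is correct and follows exactly the approach of the paper, which simply identifies $C(\varepsilon)=-\varepsilon\,\lambda(\Omega,1/\varepsilon)$ and invokes Theorem~\ref{T:AsReste}. You have merely spelled out the details the paper leaves implicit, including the handling of the endpoint $\gamma=2/3$ when $\overline{\nu}_{+}=0$.
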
 
Let us recall that the finiteness of $\lambda(\Omega, \alpha)$ is closely related to the compactness of the injection of $H^{1}(\Omega)$ into $L^{2}(\partial\Omega)$ and for some cusps, where $\lambda(\Omega, \alpha)=-\infty$, this injection is not compact  (see \cite{NazTask11,Dan13}).

\subsection{Transition temperature of  superconducting models.}

In the study of superconducting models, the physics literature has explored over the years the possibility of increasing the critical fields. Another more interesting and more recent idea is to increase the temperature below which the normal state (i.e. the critical point of the Ginzburg-Landau energy for which the material is nowhere in the superconducting state) is not stable. For zero fields associated to a superconducting body $\Omega$, enhanced surface superconductivity is modeled via a negative penetration depth $b<0$ and following \cite{GiSm07}, this critical temperature, $T_c^b(\Omega)$, is given by:
\beq\label{defTc}
T_c^b(\Omega)=T_{c_0} - T_{c_0} \lambda\Big(\Omega, \frac{\xi(0)}{|b|}\Big)
\ee
where  $\xi(0)>0$ is the so-called coherence length at zero temperature, $T_{c_0}$ is the vacuum zero field critical temperature for $b= \infty$ (corresponding to a superconductor surrounded by vacuum) and $\lambda(\Omega, \alpha)$ is the first eigenvalue of the Robin problem.

Thanks to Theorem \ref{T:AsReste}, for $|b|$ small enough, we have 
$$T_c^{b}(\Omega) \geq  T_{c_0} + T_{c_0}  \frac{\xi(0)^2}{|b|^2} \Big(  \vert \sE(\Omega) \vert   +  O\big( |b|^{\gamma} \big) \Big) $$
for some $\gamma \in (0,\frac23)$. Since $\vert \sE(\Omega) \vert \geq 1$ and goes to $+ \infty$ as the corners of $\partial \Omega$ become sharper, our results is consistent with the general physical principle of increase of $T_c^b(\Omega)$ due to confinement (see for instance  \cite[Section 4]{MontInd00} and see  \cite{YaPe00, BaYaPe02} concerning superconducting properties of nanostructuring materials).

 \Bk

\bibliographystyle{amsalpha}
\bibliography{biblio}

\end{document}